\let\NAT@parse\undefined
\DeclareMathAlphabet{\mathcal}{OMS}{cmsy}{m}{n}
\definecolor{ao(english)}{rgb}{0.0, 0.5, 0.0}
\pgfplotsset{compat=newest}
\crefname{figure}{Fig.}{Fig.}
\newtheorem{thm}{Theorem}
\crefname{thm}{Theorem}{Theorems}
\newtheorem{prop}{Proposition}
\crefname{prop}{Proposition}{Propositions}
\newtheorem{lem}{Lemma}
\crefname{lem}{Lemma}{Lemmas}
\newtheorem{cor}{Corollary}
\crefname{cor}{Corollary}{Corollaries}
\theoremstyle{remark}
\newtheorem{rem}{Remark}
\crefname{rem}{Remark}{Remarks}
\theoremstyle{remark}
\newtheorem{example}{Example}
\crefname{rem}{Example}{Examples}
\crefname{ass}{Assumption}{Assumption}
\let\mathbb=\mathds
\crefname{conj}{Conjecture}{Conjectures}
\theoremstyle{definition}
\newtheorem{defn}{Definition}
\crefname{defn}{Definition}{Definitions}
\crefname{prob}{Problem}{Problems}
\crefname{algorithm}{Algorithm}{Algorithms}
\newcommand{\Rmn}{\mathbb{R}^{n \times m}}
\newcommand{\Rnn}{\mathbb{R}^{n \times n}}
\newcommand{\transp}{\mathsf{T}}
\colorlet{FigColor1}{blue}
\colorlet{FigColor2}{red}
\colorlet{FigColor3}{ao(english)}
\colorlet{FigColor4}{orange}
\pgfplotsset{every axis plot/.append style={line width=1.5pt}}
\title{\LARGE Strongly unimodal systems
}
\author{Christian Grussler and Rodolphe Sepulchre
	\thanks{The authors are with the Control Group at the Department of Engineering, University of Cambridge, Trumpington Street, Cambridge CB2 1PZ, United Kingdom.
				{\tt\small \{christian.grussler, r.sepulchre\}@eng.cam.ac.uk}}%
\thanks{The research leading to these results has received funding from the European Research Council under the
Advanced ERC Grant Agreement Switchlet n.670645 and from DGAPA-UNAM under the grant PAPIIT RA105518.
}
	}
\begin{document}

	\maketitle
	\thispagestyle{empty}
	\pagestyle{empty}

	\begin{abstract}
We investigate the property for an input-output system to map unimodal inputs to unimodal outputs. As a first step, we analyse this property for linear time-invariant (LTI) systems, static nonlinearities, and interconnections of those. In particular, we show how unimodality is closely related to the concepts of positivity, monotonicity, and total positivity.  
	\end{abstract}
	\vspace*{.3 cm}
	\begin{keywords}
		Strong unimodality, logarithmic concavity, external positivity, positive systems, damping, neural networks, total positivity.
		\end{keywords}

	\section{Introduction}

System analysis via the concept of {\it positivity} has gained considerable popularity in the recent years \cite{farina2011positive,rantzer2015scalable,son1996robust,tanaka2011bounded,sootla2012scalable}. 
From a modelling viewpoint, the value of devoting a special treatment to dynamical models that manipulate positive variables (states, inputs, or outputs) was recognised early by Luenberger \cite{luenberger1979introduction}, as this situation frequently arises in  networks, economics, biology, transport, etc.  From an analysis viewpoint, the increasing use of convexity analysis in system theory led a number of authors to revisit the  classical linear-quadratic theory of linear-time invariant (LTI) systems in the presence of positive constraints. Positivity was shown  to be a source of numerical tractability and simplicity even in the standard context of  Lyapunov analysis \cite{rantzer2015scalable}, optimal control design \cite{tanaka2011bounded,ebihara2012optimal}, or   system gain computation \cite{farina2011positive}.  It is surprising that such properties have only started to gain widespread interest.  Positivity concepts have also proven  useful beyond LTI systems. Positivity is  central to consensus and distributed system analysis, which involves linear time-varying models \cite{Sepulchre2010}. It is also central to the theory of monotone systems \cite{Hirsch2006,Angeli2003} and to the recent development of differential positivity analysis \cite{Forni2016,Mostajeran2018a}.

The present paper focuses on the input-output (or external) concept of  positivity: a system is called (externally) positive if it maps positive inputs to positive outputs. For linear systems, this property is equivalent to (external) monotonicity: input signals with a time-derivative that has no sign variation  are mapped to output signals with the same property. In a similar spirit, we aim at characterizing systems that map unimodal inputs to unimodal outputs : input signals with a time-derivative that has at most {\it one} sign variation are mapped to output signals with the same property. We call such systems {\it strongly unimodal}  because a well-known result in probability theory: strongly unimodal densities are precisely those that map unimodal densities to unimodal densities by convolution \cite{ibragimov1956composition}.
 	
While strong unimodality has been extensively studied in statistics and interpolation theory, it does not seem to have received much attention in system theory. Our motivation is that it is nevertheless  a natural property to expect in the context of mean-field models. Classical examples include amplifier modelling  in electronics, conductance modelling in neurophysiology, or reaction rate modelling in biochemistry.  In first approximation, it is natural in all those examples to expect that a unimodal input is mapped to a unimodal output (see e.g. \cite{Sepulchre2018} for details).  This motivation makes direct contact with the questions that have motivated the development of total positivity theory in interpolation theory and in statistics  \cite{karlin1968total,dharmadhikari1988unimodality,Schoenberg1988polyI}.  Starting with the early work of Schoenberg   \cite{Schoenberg1930vari}, the entire theory has been motivated by a characterization of maps with {\it variation-diminishing} properties. We believe that such properties could play an important role as well in system analysis of models grounded in mean-field principles.

As a first step towards a more general theory, the main focus of the present paper is the simple class of  LTI systems. Our main result is to show that strong unimodality of a LTI state-space model can be studied as the external positivity of a {\it compound} LTI state-space model. This methodology is central to the theory of total positivity: properties of unimodal systems   are studied via the properties of a compound {\it externally positive} system. As an elementary application, we single out the main difference between positive and unimodal LTI systems: externally positive systems have one dominant real pole, whereas unimodal systems have two dominant real poles. Using properties of log-concave functions, we derive a number of properties for the interconnections of LTI unimodal systems and monotone functions. While elementary, those preliminary results suggest a strong potential of total positivity theory in system analysis.

The remainder of the paper is organized as follows. After some preliminaries in \Cref{sec:prelim}, the theory of external positive systems is briefly reviewed in \Cref{sec:ex_pos}. Then \Cref{sec:strong_uni} introduces the analog concept of strong unimodality. Our preliminary  results are presented in \Cref{sec:state_space}. \Cref{sec:inter} introduces elementary interconnection properties of unimodal systems. The papers ends with concluding remarks in \Cref{sec:conc}. Proofs are given in the appendix.

\tikzstyle{int}=[draw,minimum width=1cm, minimum height=1cm, very thick, align = center]
\begin{figure}
	\hspace*{0.02 cm}
\begin{center}
%
%
%
%
%
%
%
		\begin{tikzpicture}[>=latex']
	\node (a) [int] {externally positive \\ LTI system}; 
	\node (b) [left of=a,node distance=4cm, coordinate]{};
	\node [coordinate] (end) [right of=a, node distance=4cm]{};
	
	\path[->,thick] (b) edge node[below]{input} node(u_inc)[above]{} (a);
	\begin{axis}[ticks = none,width = 3 cm,at=(u_inc), anchor={south}]
	\addplot[line width = 1 pt, color = FigColor1] file{u_inc1.txt};
	\end{axis}
	
	\path[->,thick] (a) edge node[below]{output} node(y_inc)[above,midway]{} (end);
	\begin{axis}[ticks = none,width = 3 cm,at=(y_inc), anchor={south}]
	\addplot[line width = 1 pt, color = FigColor2] file{y_inc1.txt};
	\end{axis}
	
	\node (a1) [int, below of = a, anchor = north, yshift = - .5 cm] {strongly unimodal \\ LTI system}; 
	\node (b1) [left of=a1,node distance=4cm, coordinate]{};
	\node [coordinate] (end1) [right of=a1, node distance=4cm]{};
	
	\path[->,thick] (b1) edge node[below]{input} node(u_inc)[above]{} (a1);
	\begin{axis}[ticks = none,width = 3 cm,at=(u_inc), anchor={south}]
	\addplot[line width = 1 pt, color = FigColor1] file{u_uni1.txt};
	\end{axis}
	
	\path[->,thick] (a1) edge node[below]{output} node(y_inc)[above,midway]{} (end1);
	\begin{axis}[ticks = none,width = 3 cm,at=(y_inc), anchor={south}]
	\addplot[line width = 1 pt, color = FigColor2] file{y_uni1.txt};
	\end{axis}
	
	\end{tikzpicture}
\end{center}
\caption{A positive LTI system maps monotone inputs to monotone outputs. A strongly unimodal LTI system maps unimodal inputs to unimodal outputs.}
	\label{fig:UNIMOD}
\end{figure}
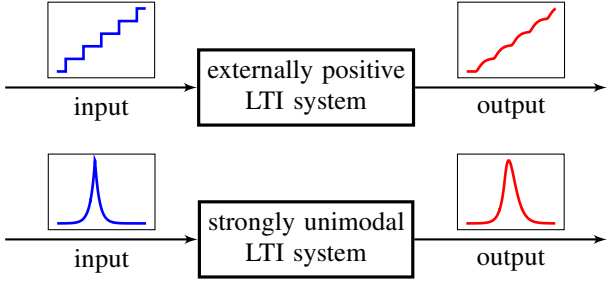

	\section{Preliminaries}
	\label{sec:prelim}
	\subsection{Notations}
	For real valued matrices $X = (x_{ij}) \in \Rmn$, including vectors $x = (x_i) \in \mathbb{R}^n$, we use the following notation. $X$ is called \emph{nonnegative}, $X \in \Rmn_{\geq 0}$, if all its entries $x_{ij}$ are nonnegative. If $X \in \Rnn$, then $\sigma(X) = \{\lambda_1(X),\dots,\lambda_n(X)\}$ denotes its \emph{spectrum}, where we order the eigenvalues by descending real part, i.e., $\lambda_1(X)$ is the eigenvalue with the largest real part, counting multiplicity. In case that the real part of two eigenvalues is equal, we subsort them by ascending imaginary part. In case that the real part of some eigenvalues is equal, we subsort them by ascending modulus. Further, $X$ is said to be \emph{positive semidefinite}, $X \succeq 0$, if $X = X^\transp$ and $\sigma(X) \subset \mathbb{R}_{\geq 0}$. Analogously, we define positive and positive definite matrices, $X \in \mathbb{R}_{>0}$ and $X \succ 0$, respectively. Letting $I_n$ denote the identity matrix in $\mathbb{R}^{n \times n}$, the \emph{Kronecker sum} of two matrices $X \in\mathbb{R}^{n \times n}$ and $Y \in \mathbb{R}^{m \times m}$ is given by $X \oplus Y := (X \otimes I_m) + (I_n \otimes Y)$, where $\otimes$ stands for the Kronecker product. 
	
	For a real valued function $g: \mathbb{R} \to \mathbb{R} \cup \{ -\infty \}$, we say that it is \emph{concave} if $g(\lambda x + (1-\lambda)y) \geq \lambda g(x) + (1-\lambda) g(y)$ for all $0 \leq \lambda \leq 1$. The set of all concave functions will be denoted by $\mathcal{S}_{\text{c}}$, the set of all \emph{nonnegative functions} is given by $\mathcal{S}_{\geq 0} := \{g: \mathbb{R} \to \mathbb{R}_{\geq0}\}$. Further, the set of all integrable functions will be denoted by $L_1$. Then, the \emph{convolution} of two real-valued functions $g$ and $u$ is defined as $(g \ast u)(t) = \int_{-\infty}^\infty f(t-\tau) g(\tau) d \tau$ and for $\mathcal{S} \subset \mathbb{R}$, the (1-0) indicator function is defined as 
	\begin{align*}
	\mathds{1}_{\mathcal{S}}(x) = \begin{cases}
	1 & x \in \mathcal{S}\\
	0 & x \notin \mathcal{S}
	\end{cases}
	\end{align*}
	Finally, we define $\dot{g}:= \dfrac{d}{dt}g $, $\ddot{g} = \dfrac{d^2}{dt^2}g$, $\dddot{g} = \dfrac{d^3}{dt^3}g$ to be the first, second and third derivative of a real valued function $g$.

	\subsection{State-space realizations}
	A LTI state-space model
	\begin{equation}\label{eq:SISO}
	\begin{aligned}
	\dot{x} &= Ax + bu\\
	y &= cx
	\end{aligned} 
	\end{equation}
	where $A \in \mathbb{R}^{n\times n}$, $b,c^\transp \in \mathbb{R}^n$ defines  a unique causal LTI system with impulse response   $g(t) = ce^{At}b \mathds{1}_{[0,\infty)}$. The triple $(A,b,c)$ is called a \emph{realization} of this impulse response. Further, we also refer to $(A,b,c)$ as an LTI system and mean \cref{eq:SISO}. The following proposition is crucial for the derivation of our results. 
	\begin{prop}[Impulse response product~\cite{ebhihara2018analysis}]
		\label{thm:impprod}
		For $A_1 \in \mathbb{R}^{n_1 \times n_1}$, $b_1,c_1^\transp \in \mathbb{R}^{n_1}$, $A_2 \in \mathbb{R}^{n_2 \times n_2}$ and $b_2,c_2^\transp \in \mathbb{R}^{n_2}$, let $g_1(t) = c_1e^{A_1t}b_1 \mathds{1}_{[0,\infty)}$ and $g_2(t) = c_2e^{A_2t}b_2 \mathds{1}_{[0,\infty)}$. Then $g_1(t)g_2(t) \mathds{1}_{[0,\infty)} = \bar{c} e^{\bar{A}t}\bar{b} \mathds{1}_{[0,\infty)}$, where
		\begin{align*}
		\bar{A} = A_1 \oplus A_2, \quad \bar{b} = b_1 \otimes b_2, \quad \bar{c} = c_1 \otimes c_2.
		\end{align*}
	\end{prop}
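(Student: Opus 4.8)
The plan is to reduce the whole statement to the single algebraic identity
\[
e^{(A_1 \oplus A_2)t} = e^{A_1 t} \otimes e^{A_2 t}, \qquad t \in \mathbb{R},
\]
combined with the mixed-product rule $(P \otimes Q)(R \otimes S) = (PR)\otimes(QS)$ for Kronecker products.

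First I would establish this exponential identity. Writing $\bar A = A_1 \oplus A_2 = (A_1 \otimes I_{n_2}) + (I_{n_1} \otimes A_2)$, the two summands commute because $(A_1 \otimes I_{n_2})(I_{n_1} \otimes A_2) = A_1 \otimes A_2 = (I_{n_1} \otimes A_2)(A_1 \otimes I_{n_2})$, so $e^{\bar A t} = e^{(A_1 \otimes I_{n_2})t}\, e^{(I_{n_1} \otimes A_2)t}$. Expanding the matrix exponential as a power series and using $(A_1 \otimes I_{n_2})^k = A_1^k \otimes I_{n_2}$ gives $e^{(A_1 \otimes I_{n_2})t} = e^{A_1 t} \otimes I_{n_2}$, and likewise $e^{(I_{n_1} \otimes A_2)t} = I_{n_1} \otimes e^{A_2 t}$; one more application of the mixed-product rule yields $e^{\bar A t} = (e^{A_1 t}\otimes I_{n_2})(I_{n_1}\otimes e^{A_2 t}) = e^{A_1 t}\otimes e^{A_2 t}$. (An equivalent, power-series-free route is to check that $w(t) := e^{A_1 t}\otimes e^{A_2 t}$ solves $\dot w = \bar A w$, $w(0)=I_{n_1 n_2}$, by the product rule and the definition of $\oplus$, and then invoke uniqueness of solutions of linear ODEs.)

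Next I would apply the mixed-product rule twice with $\bar b = b_1 \otimes b_2$ and $\bar c = c_1 \otimes c_2$:
\[
\bar c\, e^{\bar A t}\, \bar b = (c_1 \otimes c_2)\big(e^{A_1 t}\otimes e^{A_2 t}\big)(b_1 \otimes b_2) = \big(c_1 e^{A_1 t} b_1\big) \otimes \big(c_2 e^{A_2 t} b_2\big).
\]
Since each factor on the right is a scalar, this Kronecker product is ordinary multiplication, so $\bar c\, e^{\bar A t}\, \bar b = (c_1 e^{A_1 t} b_1)(c_2 e^{A_2 t} b_2) = g_1(t)g_2(t)$ for $t \ge 0$. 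Multiplying through by $\mathds{1}_{[0,\infty)}(t)$ and using $\mathds{1}_{[0,\infty)}^2 = \mathds{1}_{[0,\infty)}$ gives the claimed identity; in particular the pointwise product of the two causal impulse responses is again the causal impulse response of the realization $(\bar A, \bar b, \bar c)$.

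There is essentially no serious obstacle: the only points requiring care are the commutativity argument that justifies splitting the exponential (it relies on the mixed-product rule), and the bookkeeping observation that a Kronecker product of $1\times 1$ blocks is plain scalar multiplication. Everything else is routine.
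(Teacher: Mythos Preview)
Your argument is correct and is exactly the standard route: establish $e^{(A_1\oplus A_2)t}=e^{A_1t}\otimes e^{A_2t}$ via commutativity of the two summands in the Kronecker sum, then peel off $\bar c$ and $\bar b$ with the mixed-product rule and note that the resulting $1\times 1$ Kronecker product is scalar multiplication. There is nothing to compare against in this paper, however: the proposition is quoted from \cite{ebhihara2018analysis} and no proof is given here, so your write-up simply supplies what the paper imports.
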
 
		
	For the ease of exposition, we only consider {\it causal} LTI systems in the remainder of this  paper, that is, 	we assume $g(t) = \mathds{1}_{[0,\infty)}g(t)$.

	\section{Externally positive LTI systems}
	\label{sec:ex_pos}
	Next we review the concept of external positivity.
	\begin{defn}[External positivity]
	An LTI system with impulse response $g \mathds{1}_{[0,\infty)}$ is called \emph{externally positive} if
	\begin{equation*}
	\forall u \in \mathcal{S}_{\geq 0}: \ g  \ast u \in \mathcal{S}_{\geq 0}
	\end{equation*}
	\end{defn}
	The set of all externally positive system defines a convex cone. It is  closed under parallel as well as serial interconnection. 
We review two classical equivalent definitions of external positivity.
	\begin{lem}[\cite{farina1995necessary,ohta1984reachability}]
		\label{lem:nonneg_imp}
	An LTI system  is externally positive if and only if 
	its impulse response is nonnegative. 
	\end{lem}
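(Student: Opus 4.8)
The plan is to prove the two implications of the equivalence separately: sufficiency is immediate, and necessity is argued by contraposition using a narrow nonnegative ``bump'' (approximate identity) as input.

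\emph{Sufficiency.} Assume the impulse response is nonnegative, i.e.\ $g(t) \geq 0$ for all $t$ (recall $g$ is supported on $[0,\infty)$). For any $u \in \mathcal{S}_{\geq 0}$ and any $t \in \mathbb{R}$, the integrand $\tau \mapsto g(t-\tau)\,u(\tau)$ defining $(g\ast u)(t) = \int_{-\infty}^{\infty} g(t-\tau)\,u(\tau)\,d\tau$ is a product of two nonnegative functions, so $(g\ast u)(t) \geq 0$. Hence $g\ast u \in \mathcal{S}_{\geq 0}$ and the system is externally positive.

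\emph{Necessity.} I argue the contrapositive. Suppose the impulse response is not nonnegative, so there is some $t_0 > 0$ with $g(t_0) = c\,e^{A t_0} b < 0$ (if instead the jump value $g(0^+) = cb$ is negative, then by continuity $g$ is negative on an interval $(0,\delta')$ and we pick $t_0$ in its interior). Since $t \mapsto c\,e^{At}b$ is real-analytic, hence continuous, on $(0,\infty)$, there exist $\delta \in (0,t_0)$ and $\eta > 0$ with $g(t) \leq -\eta$ for all $t \in [t_0-\delta,\,t_0+\delta]$. Choose the bounded, compactly supported, nonnegative input $u := \tfrac{1}{\delta}\,\mathds{1}_{[0,\delta]} \in \mathcal{S}_{\geq 0}$; for this $u$ the convolution integral is finite. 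For $\tau \in [0,\delta]$ we have $t_0-\tau \in [t_0-\delta,\,t_0] \subset [t_0-\delta,\,t_0+\delta]$, so
\[
(g\ast u)(t_0) = \int_0^{\delta} g(t_0-\tau)\,u(\tau)\,d\tau \;\leq\; -\eta \int_0^{\delta} u(\tau)\,d\tau \;=\; -\eta \;<\; 0 .
\]
Thus $g\ast u \notin \mathcal{S}_{\geq 0}$, so the system is not externally positive, which proves the contrapositive.

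\emph{Main obstacle.} There is no deep difficulty here; the only points requiring a little care are (i) the behaviour at $t=0$, where the impulse response may jump, which is why the state-space form is invoked to guarantee analyticity (in particular continuity) of $g$ on $(0,\infty)$, and (ii) ensuring the convolution is well-defined for the test input, which is handled by taking a bounded, compactly supported bump on the positive half-line. An alternative to the explicit estimate above is to use a genuine approximate identity $(u_\varepsilon)$ supported on $[0,\varepsilon]$ with unit mass and the fact that $(g\ast u_\varepsilon)(t_0) \to g(t_0) < 0$ as $\varepsilon \to 0^+$.
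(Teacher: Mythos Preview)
Your proof is correct. Note, however, that the paper does not actually supply its own proof of this lemma: it is stated with citations to \cite{farina1995necessary,ohta1984reachability} and treated as a standard background result, so there is no in-paper argument to compare against. Your sufficiency direction is the obvious one, and your necessity argument via a narrow nonnegative bump (an approximate identity) is the standard route; the care you take with continuity of $t\mapsto c e^{At} b$ on $(0,\infty)$ and with the boundary case $cb<0$ is appropriate and makes the argument self-contained.
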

\begin{lem}
	\label{lem:ex_mono}
An LTI system with impulse response $g $ is externally positive if and only if for all monotonically increasing $u \in \mathcal{S}_{\geq 0}$ it holds that $y= g   \ast u \in \mathcal{S}_{\geq 0}$ is monotonically increasing.
\end{lem}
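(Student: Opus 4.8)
The plan is to route both implications through \Cref{lem:nonneg_imp}, which characterizes external positivity as nonnegativity of the impulse response $g$; after that, each direction is an elementary statement about convolving with a nonnegative kernel.

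For the necessity (``only if'') part, suppose the system is externally positive, so that $g \geq 0$. Given a nondecreasing $u \in \mathcal{S}_{\geq 0}$, the output $y = g \ast u$ is nonnegative because it is an integral of the product of two nonnegative functions. To obtain monotonicity I would avoid differentiating $u$ --- which need not be differentiable --- and instead estimate the increment directly: for $t_1 \leq t_2$,
\[
y(t_2) - y(t_1) \;=\; \int_0^{\infty} g(\tau)\,\big(u(t_2 - \tau) - u(t_1 - \tau)\big)\, d\tau \;\geq\; 0 ,
\]
since $g \geq 0$ on its support and $u$ is nondecreasing with $t_2 - \tau \geq t_1 - \tau$. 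Hence $y$ is nondecreasing, and lies in $\mathcal{S}_{\geq 0}$.

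For the sufficiency (``if'') part, I would test the hypothesis on the Heaviside step $u = \mathds{1}_{[0,\infty)}$, which is nondecreasing and belongs to $\mathcal{S}_{\geq 0}$. Causality of $g$ gives the step response
\[
y(t) \;=\; (g \ast u)(t) \;=\; \int_0^{t} g(\tau)\, d\tau \qquad (t \geq 0),
\]
and $y \equiv 0$ for $t < 0$. By assumption $y$ is nondecreasing, so $\int_{t_1}^{t_2} g(\tau)\, d\tau = y(t_2) - y(t_1) \geq 0$ for every $t_1 \leq t_2$; since $g \in L_1$, this forces $g \geq 0$ almost everywhere, and \Cref{lem:nonneg_imp} then yields external positivity.

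The only step that requires a little care is the last one, namely passing from nonnegativity of all interval integrals of $g$ to pointwise nonnegativity, which is the standard Lebesgue differentiation argument; for impulse responses coming from a state-space model, $g(t) = c e^{At} b$ is analytic on $(0,\infty)$ and the conclusion is immediate. Everything else is bookkeeping, so I do not expect a genuine obstacle here.
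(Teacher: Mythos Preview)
Your proof is correct and, in fact, both more elementary and more complete than the paper's. For the necessity (``only if'') direction, the paper differentiates $y$ via the Leibniz rule, invokes the almost-everywhere differentiability of monotone functions to make sense of $\dot u$, and then reads off $\dot y \ge 0$ from \Cref{lem:nonneg_imp}. Your direct increment estimate
\[
y(t_2)-y(t_1)=\int_0^\infty g(\tau)\bigl(u(t_2-\tau)-u(t_1-\tau)\bigr)\,d\tau \ge 0
\]
reaches the same conclusion with no differentiability hypothesis on $u$ and no appeal to Leibniz-type differentiation under the integral; this is a genuine simplification at no cost. For the sufficiency (``if'') direction, the paper's proof does not spell out an argument; your idea of testing the hypothesis on the Heaviside step, so that monotonicity of $t\mapsto\int_0^t g$ forces $g\ge 0$, is the natural way to close this gap, and for state-space impulse responses the continuity of $g$ makes the final passage from nonnegative interval integrals to $g\ge 0$ immediate, as you observe.
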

For completeness, a proof of \cref{lem:ex_mono} is given in \Cref{proof:ex_mono}. 
We also recall the following important consequence of positivity.
\begin{prop}
	\label{prop:dominantpole}
	 If  $(A,b,c)$ is an externally positive LTI system, then $\lambda_1(A) \in \mathbb{R}$.
\end{prop}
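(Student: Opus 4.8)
The plan is to pass from the state space to the transfer function $G(s)=c(sI-A)^{-1}b$ and to exploit that, by \cref{lem:nonneg_imp}, external positivity is equivalent to $g(t)=ce^{At}b\ge 0$ for $t\ge 0$. Here $\lambda_1(A)$ is read as the dominant pole, so I take $(A,b,c)$ minimal (external positivity depends only on $g$), which makes the spectrum of $A$ coincide with the pole set of the rational function $G$; put $\alpha:=\mathrm{Re}\,\lambda_1(A)=\max\{\mathrm{Re}\,\lambda:\lambda\text{ is a pole of }G\}$. Some such reduction is genuinely needed, since an uncontrollable or unobservable complex eigenvalue of arbitrarily large real part leaves $g$ --- hence external positivity --- unchanged.

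The analytic heart of the argument is that the Laplace transform of a nonnegative function is, on every vertical line of its region of convergence, dominated in modulus by its value on the real axis. Indeed $G(s)=\int_{0}^{\infty}g(t)e^{-st}\,dt$ converges absolutely and equals $c(sI-A)^{-1}b$ exactly on $\{\mathrm{Re}\,s>\alpha\}$, and for $s=\sigma+i\omega$ there
\begin{equation*}
|G(\sigma+i\omega)|=\left|\int_{0}^{\infty}g(t)e^{-\sigma t}e^{-i\omega t}\,dt\right|\le\int_{0}^{\infty}g(t)e^{-\sigma t}\,dt=G(\sigma),
\end{equation*}
since $g(t)e^{-\sigma t}\ge 0$. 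Suppose, for contradiction, that no pole of $G$ with real part $\alpha$ is real, and choose a pole $p_0=\alpha+i\omega_0$ with $\omega_0\neq 0$ (one exists because the maximum $\alpha$ is attained). As $\sigma\downarrow\alpha$, the point $\sigma+i\omega_0$ stays in $\{\mathrm{Re}\,s>\alpha\}$ and tends to the pole $p_0$, so $|G(\sigma+i\omega_0)|\to\infty$; by the displayed inequality, $G(\sigma)\to\infty$ as well. But then the rational function $G$ is unbounded as $\sigma$ decreases to the real point $\alpha$, which means $\alpha$ is itself a pole of $G$ --- and $\alpha$ is real, contradicting the assumption. Hence a real pole attains the maximal real part $\alpha$; equivalently $\alpha$ is an eigenvalue of $A$, and since $\alpha$ is real with $\mathrm{Re}\,\lambda_1(A)=\alpha$, the tie-breaking convention gives $\lambda_1(A)=\alpha\in\mathbb{R}$.

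I expect the only friction to be bookkeeping: verifying that the abscissa of convergence of $\int_0^\infty g(t)e^{-st}\,dt$ is exactly $\alpha$ (immediate from the modal expansion $g(t)=\sum_k p_k(t)e^{\lambda_k t}$ with polynomial coefficients $p_k$) and stating the minimality hypothesis cleanly. The one substantive step --- the maximum-modulus-on-verticals bound --- is a one-line consequence of $g\ge 0$, and it is exactly the mechanism that forbids a complex dominant pole for positive systems; the same circle of ideas, applied to a compound externally positive system, is what underlies the later treatment of unimodal systems.
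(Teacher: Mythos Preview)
Your argument is correct. The paper does not actually prove \cref{prop:dominantpole}: it is recorded as a known structural fact, with the surrounding text pointing to Perron--Frobenius theory and the invariance of a cone in state space as the conceptual explanation. Your route is genuinely different: rather than invoking a cone-contraction argument on $e^{At}$, you work externally, using only \cref{lem:nonneg_imp} and the elementary Laplace-transform inequality $|G(\sigma+i\omega)|\le G(\sigma)$ valid for nonnegative $g$. This is the classical abscissa-of-convergence argument (in probability it is the fact that the rightmost singularity of the moment generating function of a nonnegative measure lies on the real axis), and it is entirely self-contained here. What your approach buys is that it never leaves the input--output description and uses nothing beyond nonnegativity of $g$; what the Perron--Frobenius viewpoint buys is the additional internal information (a dominant eigenvector in the cone) that your transform argument does not see.

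Two small points of bookkeeping you already flag are worth making explicit in a final write-up. First, the reduction to a minimal realization is not optional: as you note, the statement is about $\lambda_1(A)$, and an uncontrollable or unobservable complex mode can dominate without affecting $g$, so the proposition should be read for minimal $(A,b,c)$. Second, once you have shown that the real number $\alpha$ is itself a pole, the conclusion $\lambda_1(A)=\alpha$ relies on the paper's tie-breaking convention among eigenvalues of equal real part (subsorting by ascending modulus places the real eigenvalue first); it is worth saying this in one line rather than leaving it implicit.
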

This proposition links external positivity to the internal positivity property of mapping a cone to a cone in the state-space.  Perron-Frobenius theory shows that matrices that contract a cone also have a dominant eigenvalue and an  eigenvector  in the interior of the cone \cite{luenberger1979introduction}.

Verifying external positivity is hard. Nevertheless, there exist several sufficient tests \cite{anderson1996nonnegative,grussler2014modified,altafini2016minimal,grussler2012symmetry}. 
The following test from \cite{grussler2012symmetry} is particularly tractable.
\begin{prop}[Sufficient test for external positivity]
	\label{prop:ex_pos_test}
	Let $(A,b,c)$ be an LTI system and assume that there exists $Q = Q^\transp \in \Rnn$ and $\gamma \in \mathbb{R}$ such that
	\begin{subequations}
		\begin{align}
		&{A}^\transp Q + Q {A} + 2\gamma Q \preceq 0\\
		&{b}^\transp Q {b} \leq 0\\
		&Q + {c}^\transp {c} \succ 0\\
		&{c}{b} \geq 0\\
		&\lambda_{n-1}(Q) > 0 > \lambda_{n}(Q)
		\end{align}	
	\end{subequations}
	Then $(A,b,c)$ is externally positive, i.e., $\forall t \geq 0 : ce^{At}b \geq 0$.
\end{prop}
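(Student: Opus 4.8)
The plan is to recast the hypothesis as an invariant-cone statement for $A$ and then argue that $t \mapsto ce^{At}b$ cannot change sign. The relevant object is the quadratic set $\mathcal{C} := \{x \in \mathbb{R}^n : x^\transp Q x \leq 0\}$ attached to the symmetric matrix $Q$ furnished by the hypothesis. The case $b = 0$ is trivial (then $ce^{At}b \equiv 0$), so I assume $b \neq 0$ throughout.

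First I would read off the geometry of $Q$ and $c$ from the static conditions. Being symmetric, $Q$ has real eigenvalues $\lambda_1(Q) \geq \cdots \geq \lambda_n(Q)$, so the condition $\lambda_{n-1}(Q) > 0 > \lambda_n(Q)$ says that $Q$ is nonsingular with exactly one negative eigenvalue; hence $\mathcal{C}$ is a genuine (double) quadratic cone, not a degenerate one. The condition $Q + c^\transp c \succ 0$ is exactly positive-definiteness of the rank-one-perturbed form $x \mapsto x^\transp Q x + (cx)^2$, so for every $x \in \mathcal{C}\setminus\{0\}$ one gets $(cx)^2 > -x^\transp Q x \geq 0$; thus $c$ vanishes nowhere on $\mathcal{C}\setminus\{0\}$. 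Feeding $b$ into this: $b^\transp Q b \leq 0$ places $b \in \mathcal{C}$, so with $b \neq 0$ we get $cb \neq 0$, and together with $cb \geq 0$ this forces $cb > 0$.

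The crux is cone invariance, obtained from the Lyapunov-type inequality $A^\transp Q + QA + 2\gamma Q \preceq 0$. Along any solution of $\dot{x} = Ax$ one computes $\frac{d}{dt}\bigl(e^{2\gamma t}\,x(t)^\transp Q x(t)\bigr) = e^{2\gamma t}\,x(t)^\transp(A^\transp Q + QA + 2\gamma Q)x(t) \leq 0$, so $e^{2\gamma t}\,x(t)^\transp Q x(t)$ is nonincreasing; therefore $x(0) \in \mathcal{C}$ implies $x(t) = e^{At}x(0) \in \mathcal{C}$ for all $t \geq 0$. To conclude, set $f(t) := ce^{At}b$. Since $b \in \mathcal{C}$, invariance gives $e^{At}b \in \mathcal{C}$, and since $e^{At}$ is invertible and $b \neq 0$ we have $e^{At}b \neq 0$; hence $e^{At}b \in \mathcal{C}\setminus\{0\}$, where $c$ never vanishes, so $f$ has no zero on $[0,\infty)$. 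As $f$ is continuous with $f(0) = cb > 0$, it stays positive, i.e. $ce^{At}b > 0 \geq 0$ for all $t \geq 0$, which is the claim.

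I do not anticipate a serious obstacle; the proof is short once $\mathcal{C}$ is identified. The delicate points are bookkeeping: verifying that the spectral condition genuinely pins down the signature of $Q$ (so that $\mathcal{C}$ is the union of two opposite nappes, which is ultimately what forbids the continuous, nowhere-zero function $ce^{At}b$ from flipping sign), and disposing of the degenerate cases $b = 0$ and $cb = 0$. An essentially equivalent endgame is the contradiction argument: if $ce^{At_0}b < 0$ for some $t_0 > 0$, then by the intermediate value theorem $c(e^{At_1}b) = 0$ for some $t_1 \in (0, t_0]$ with $e^{At_1}b \in \mathcal{C}\setminus\{0\}$, contradicting $Q + c^\transp c \succ 0$.
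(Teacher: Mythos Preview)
The paper does not actually prove this proposition; it is quoted from the cited reference \cite{grussler2012symmetry} and stated without proof, so there is no in-paper argument to compare against.

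Your argument is correct and is essentially the intended one from the original source: the conditions encode that the quadratic double cone $\mathcal{C}=\{x:x^\transp Qx\le 0\}$ is forward invariant under $e^{At}$ (via the Lyapunov-type inequality with the $e^{2\gamma t}$ scaling), that $b\in\mathcal{C}$, and that the linear functional $c$ is nonzero on $\mathcal{C}\setminus\{0\}$ (from $Q+c^\transp c\succ 0$); continuity of $t\mapsto ce^{At}b$ together with $cb>0$ then forces positivity for all $t\ge 0$. Your handling of the degenerate case $b=0$ and your observation that $cb\ge 0$ is automatically upgraded to $cb>0$ once $b\ne 0$ are both clean. One minor remark: the signature condition $\lambda_{n-1}(Q)>0>\lambda_n(Q)$ is largely redundant with $Q+c^\transp c\succ 0$ (a rank-one positive semidefinite perturbation can raise at most one eigenvalue across zero, so $Q$ can have at most one nonpositive eigenvalue), but this does not affect the validity of your proof.
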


\section{Strongly unimodal LTI systems}
\label{sec:strong_uni}
In the following, we introduce the class of strongly unimodal LTI systems.

	\begin{defn}[Unimodality]
	A function $g: \mathbb{R} \to \mathbb{R}$ is called  \emph{unimodal} if one of the following equivalent conditions hold:
	\begin{enumerate}
		\item $g$ has a unique local maximum, i.e. there exists a mode $m \in \mathbb{R}$ such that $f$ is montonotonically increasing on $(-\infty, m]$ and montonically decreasing on $[m,+\infty)$.
		\item $g$ is quasi-concave, i.e., $$g(\lambda x + (1-\lambda) y) \geq \min \lbrace g(x), g(y) \rbrace$$ for all $x,y$ and $\lambda \in [0,1]$.
		
		The set of all unimodal functions is denoted by $\mathcal{S}_{\text{qc}}$.
	\end{enumerate}
\end{defn}
\begin{defn}[Strong unimodality]
	\label{def:strong_unimod}
	An LTI system with impulse response $g  $ is called \emph{strongly unimodal} if 	
	\begin{equation*}
	\forall u \in \mathcal{S}_{\text{qc}}: g   \ast u \in \mathcal{S}_{\text{qc}}
	\end{equation*}
\end{defn}
	 The impulse response of a strongly unimodal LTI system is certainly unimodal (approximate the Dirac impulse with the unimodal Dirac sequence, $\delta_\epsilon(t) = \frac{1}{2 \pi \epsilon} e^{-\frac{t^2}{\epsilon}}$ for $\epsilon > 0$ and apply the definition). However, unimodality of the impulse response is not sufficient.   This observation was first made  by Ibragimov \cite{ibragimov1956composition}, who introduced the terminology of strong unimodality in the context of probability distributions.

\begin{defn}[Log-concavity]
	$g \in \mathcal{S}_{\geq 0}$ is called \emph{log-concave} if for all $x,y \in \mathbb{R}$ and $\lambda  \in [0,1] $: $$g(\lambda x + (1-\lambda) y) \geq g(x)^\lambda g(y)^{1-\lambda}.$$
	Equivalently, $g$  is log-concave if and only if
	$g(x) = e^{\phi(x)}$ for some $\phi \in \mathcal{S}_c$, i.e., $\log(g) \in \mathcal{S}_{\text{c}}$. The set of all log-concave functions is denoted by $\mathcal{S}_{\text{logc}}$
\end{defn}

\begin{prop}[Log-concavity and unimodality \cite{ibragimov1956composition,dharmadhikari1988unimodality}]
	\label{prop:log_conv_unimod}
	$g \in L_1 \cap \mathcal{S}_{\text{logc}}$ if and only if 
	\begin{equation}
	\forall u \in \mathcal{S}_{\text{qc}}: g \ast u \in \mathcal{S}_{\text{qc}}.
	\end{equation}
\end{prop}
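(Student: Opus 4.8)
The plan is to prove the two implications separately; throughout I would take $g \in L_1$ as a standing hypothesis, since the convolution condition is only meaningful when $g \ast u$ is a finite, real-valued function. A preliminary reduction: mollifying with the Gaussian kernel $\delta_\epsilon$ of the paper, $u_\epsilon := u \ast \delta_\epsilon$ is smooth and still quasi-concave while $g \ast u_\epsilon \to g \ast u$, so for the forward direction it suffices to handle $u$ absolutely continuous with a mode $m$, hence with $u' \geq 0$ a.e.\ on $(-\infty,m)$ and $u' \leq 0$ a.e.\ on $(m,\infty)$.

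For sufficiency I would start from the elementary reformulation of log-concavity: $\phi := \log g$ is concave if and only if
\begin{equation*}
g(a)\,g(b) \;\geq\; g(c)\,g(d) \qquad \text{whenever } c \leq a,\,b \leq d \text{ and } a+b=c+d,
\end{equation*}
which is Jensen's inequality for the two convex combinations of $c,d$ producing $a$ and $b$. Splitting $u' = p - q$ with $p := \max(u',0)$ supported in $(-\infty,m]$ and $q := \max(-u',0)$ in $[m,\infty)$, one gets $(g \ast u)' = g \ast p - g \ast q$ with both terms nonnegative. For $x_1 < x_2$ and $s \leq m \leq t$ the choice $a = x_1-s$, $b = x_2-t$, $c = x_1-t$, $d = x_2-s$ satisfies the hypothesis above, so $g(x_1-s)g(x_2-t) \geq g(x_1-t)g(x_2-s)$; integrating this against $p(s)q(t)\,ds\,dt$ gives
\begin{equation*}
(g \ast p)(x_1)\,(g \ast q)(x_2) \;\geq\; (g \ast p)(x_2)\,(g \ast q)(x_1) \qquad (x_1 < x_2),
\end{equation*}
i.e.\ $R := (g \ast p)/(g \ast q)$ is non-increasing wherever $g \ast q > 0$. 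Hence $(g \ast u)' = (g \ast q)(R-1)$ is $\geq 0$ on a left ray and $\leq 0$ on a right ray (and equals the nonnegative $g \ast p$ where $g \ast q$ vanishes), so $g \ast u$ first increases, then decreases, i.e.\ $g \ast u \in \mathcal{S}_{\text{qc}}$. Some extra bookkeeping is needed when $\mathrm{supp}\,g \neq \mathbb{R}$, but it is routine.

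For necessity I would argue by contraposition, assuming $g$ is not log-concave. If $g$ is not even unimodal it is not strongly unimodal either (the contrapositive of the remark already made in the paper), so I may assume $g$ is unimodal with interval support, positive on the interior, and $\phi = \log g$ measurable, unimodal, but \emph{not} concave there; measurability then upgrades this to a midpoint violation $g(x_2)^2 < g(x_1)\,g(x_3)$ with $x_2 = \tfrac12(x_1+x_3)$. The challenge is that naive test inputs do not detect this: every symmetric unimodal $g$, log-concave or not, maps the boxes $\mathds{1}_{[-T,T]}$ to unimodal outputs, so the test input must be built carefully. Following Ibragimov \cite{ibragimov1956composition} (see also \cite{dharmadhikari1988unimodality}), one constructs a quasi-concave $u$ whose convolution with $g$ acquires two strict local maxima, the depth of the intervening valley being controlled by the positive quantity $g(x_1)g(x_3) - g(x_2)^2$; producing and analysing such a $u$ is the technical core of the theorem.

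The main obstacle, then, is the necessity direction. Sufficiency is essentially bookkeeping around the two-point inequality for $\log g$ (equivalently, the total positivity of order two of the kernel $(x,y) \mapsto g(x-y)$ and its variation-diminishing consequence). Necessity instead requires exhibiting one concrete quasi-concave input whose image under convolution with a non-log-concave $g$ is provably bimodal — a delicate construction for which I would rely on \cite{ibragimov1956composition,dharmadhikari1988unimodality}.
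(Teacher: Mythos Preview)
The paper does not give its own proof of this proposition; it is quoted as a classical result from \cite{ibragimov1956composition,dharmadhikari1988unimodality} and used as a black box. Your sketch is a faithful outline of the argument found in those references: the sufficiency direction via the two-point log-concavity inequality (equivalently, total positivity of order~$2$ of the kernel $(x,y)\mapsto g(x-y)$) and the resulting monotonicity of the ratio $(g\ast p)/(g\ast q)$ is precisely the route taken in \cite{dharmadhikari1988unimodality}, and you correctly flag that the necessity direction is the substantive half, requiring Ibragimov's explicit construction of a quasi-concave input producing a bimodal output. There is nothing to compare against in the paper itself.
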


Thus, an LTI system is strongly unimodal if and only if its impulse response is log-concave. This means that strongly unimodal systems are a subset of externally positive systems \cref{lem:nonneg_imp}. In particular, it holds that
\begin{equation}
\mathcal{S}_{\text{c}} \cap \mathcal{S}_{\geq 0} \subset \mathcal{S}_{\text{logc}} \subset \mathcal{S}_{\text{qc}} \label{prop:log_conc_unimod}.
\end{equation}

We note that many unimodal density functions are also log-concave, e.g., for the exponential distribution, normal distribution, Laplace distribution, etc. \cite{boyd2004convex,karlin1968total,dharmadhikari1988unimodality}. In probability theory, log-concave density functions form a set of {\it well-behaved}
unimodal density functions \cite{Samworth2017}.

The next results reformulate log-concavity as a positivity condition.
\begin{lem}
	\label{lem:twice_diff}
	Let $g \in \mathcal{S}_{\geq 0}$ be twice-differentiable and $\mathcal{I} \subset \mathbb{R}$ be an interval. Then $g  \in \mathcal{S}_{\text{logc}}$ if and only if
	$$\forall t \in \mathcal{I}: \ \dot{g}(t)^2 -  g(t) \ddot{g}(t) \geq 0.$$
\end{lem}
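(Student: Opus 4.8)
The plan is to reduce the statement to the classical second-derivative characterization of concavity applied to $\phi := \log g$, working on the set where $g$ is positive. On the open set $\{t : g(t) > 0\}$ the function $\phi = \log g$ is, like $g$, twice-differentiable, and the chain rule gives
\[
\dot\phi = \frac{\dot g}{g}, \qquad \ddot\phi = \frac{\ddot g}{g} - \frac{\dot g^2}{g^2} = \frac{g\ddot g - \dot g^2}{g^2}.
\]
By the definition of log-concavity, $g \in \mathcal{S}_{\text{logc}}$ amounts to $\phi \in \mathcal{S}_{\text{c}}$ on $\{g > 0\}$, and --- invoking the standard fact that a twice-differentiable function on an interval is concave if and only if its second derivative is nonpositive --- this is equivalent to $g\ddot g - \dot g^2 \le 0$ there. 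Multiplying through by $g^2 > 0$, which leaves the sign unchanged, yields exactly $\dot g(t)^2 - g(t)\ddot g(t) \ge 0$ on $\{g > 0\}$.

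It remains to account for the points of $\mathcal{I}$ where $g$ vanishes. If $g(t_0) = 0$ then, by nonnegativity of $g$, $t_0$ is a global minimiser of $g$, so $\dot g(t_0) = 0$; hence $\dot g(t_0)^2 - g(t_0)\ddot g(t_0) = 0 \ge 0$ holds automatically, and such points place no constraint. Combining the two cases gives both directions: if $g$ is log-concave, $\phi$ is concave on $\{g>0\}$, so the inequality holds there and trivially on the zero set, hence on all of $\mathcal{I}$; conversely, if the inequality holds on $\mathcal{I}$, then $\phi = \log g$ is concave wherever $g$ is positive, which is the log-concavity of $g$.

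I expect the only genuinely delicate point --- and the one to write out with care --- to be this interface between the positivity set of $g$ and its zeros: it is nonnegativity of $g$, rather than the differential inequality itself, that forces $\dot g$ to vanish at a zero of $g$ (so that the inequality degenerates there to the harmless $\dot g^2 \ge 0$), and this is what lets the local computation on $\{g>0\}$ be glued into a statement about all of $\mathcal{I}$. Away from the zeros of $g$ the argument is just the chain-rule computation above together with the second-derivative test for concavity.
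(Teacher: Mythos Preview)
Your proof is correct and takes essentially the same approach as the paper: the paper's own argument is a one-line citation to \S3.5.2 of Boyd and Vandenberghe's \emph{Convex Optimization}, which is precisely the chain-rule computation $\ddot\phi = (g\ddot g - \dot g^2)/g^2$ combined with the second-derivative test for concavity that you have written out in full. Your additional care about the zero set of $g$ goes slightly beyond what the paper (or the cited reference, which assumes strict positivity) makes explicit.
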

\begin{proof}
	Follows by \cite[Sec.~3.5.2]{boyd2004convex} and the fact that if $g \in  \mathcal{S}_{\text{logc}}$ is then $g \mathds{1}_{\mathcal{I}} \in \mathcal{S}_{\text{logc}}$.
\end{proof}

\begin{prop}
	\label{prop:strong_uni_sys}
	A causal LTI system with impulse response $g \in L_1$ is strongly unimodal if and only if $g \mathds{1}_{[0, \infty)} \in \mathcal{S}_{\geq 0}$ and 
	$$\forall t \geq 0: \dot{g}(t)^2 -  g(t) \ddot{g}(t) \geq 0.$$
\end{prop}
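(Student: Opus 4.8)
The plan is to chain the analytic characterization of strong unimodality from \Cref{prop:log_conv_unimod} with the second-order characterization of log-concavity from \Cref{lem:twice_diff}, specialized to the half-line $\mathcal{I}=[0,\infty)$. The whole proposition is essentially ``$\Cref{prop:log_conv_unimod}+\Cref{lem:twice_diff}$''; the work is in making the two reductions precise.

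First I would note that $g\in L_1$ gives $g\mathds{1}_{[0,\infty)}\in L_1$, so by \Cref{def:strong_unimod} and \Cref{prop:log_conv_unimod} the causal system with impulse response $g\mathds{1}_{[0,\infty)}$ is strongly unimodal \emph{if and only if} $g\mathds{1}_{[0,\infty)}\in\mathcal{S}_{\text{logc}}$. Thus it remains to show that this log-concavity is equivalent to the two stated conditions. Nonnegativity of $g\mathds{1}_{[0,\infty)}$ is part of the definition of $\mathcal{S}_{\text{logc}}$, which already yields $g\mathds{1}_{[0,\infty)}\in\mathcal{S}_{\geq 0}$. For the remaining content I would unravel log-concavity of the \emph{truncated} impulse response into log-concavity of $g$ on $[0,\infty)$: since $[0,\infty)$ is convex, $\mathds{1}_{[0,\infty)}\in\mathcal{S}_{\text{logc}}$, and a product/restriction argument (the truncation fact already invoked in the proof of \Cref{lem:twice_diff}) shows that $g\mathds{1}_{[0,\infty)}\in\mathcal{S}_{\text{logc}}$ is equivalent to $g$ being log-concave on $[0,\infty)$.

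Finally, the realization formula $g(t)=ce^{At}b$ shows that $g$ is real-analytic, in particular twice differentiable, on $[0,\infty)$, so \Cref{lem:twice_diff} with $\mathcal{I}=[0,\infty)$ converts log-concavity of $g$ on $[0,\infty)$ into the pointwise inequality $\dot g(t)^2-g(t)\ddot g(t)\geq 0$ for all $t\geq 0$. Concatenating the three equivalences proves the claim; moreover, by \Cref{lem:nonneg_imp} the first stated condition is precisely external positivity, so the statement reads ``strongly unimodal $=$ externally positive $+$ the pointwise inequality'', matching the inclusion picture in \Cref{sec:strong_uni}.

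The step requiring the most care is the passage between the truncated function on $\mathbb{R}$ and the smooth realization on $[0,\infty)$. The jump of $g\mathds{1}_{[0,\infty)}$ at $t=0$, and the value $-\infty$ taken by its logarithm on $(-\infty,0)$, are harmless precisely because $\mathds{1}_{[0,\infty)}$ is itself log-concave, so they do not affect concavity of the logarithm on the half-line. The more delicate point is that the second-order test in \Cref{lem:twice_diff} is only faithful on an interval on which $g$ is positive: a zero of $g$ in the interior of its support would satisfy $\dot g(t)^2-g(t)\ddot g(t)\geq 0$ trivially while destroying log-concavity. For the ``if'' direction one should therefore read the pointwise condition on the support of $g$, which is an interval whenever the system is strongly unimodal since log-concave functions have convex support; this is the only bookkeeping subtlety in the argument.
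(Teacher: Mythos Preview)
Your approach is correct and matches the paper's: the paper does not give a separate proof for \Cref{prop:strong_uni_sys} but presents it as the immediate concatenation of \Cref{prop:log_conv_unimod} (Ibragimov's characterization) and \Cref{lem:twice_diff} (the second-order test), which is exactly the chain you spell out. Your added discussion of the boundary/truncation and the support subtlety is more careful than the paper's treatment, but the underlying argument is the same.
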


With this proposition, one immediately verifies that any externally positive first-order system is also strongly unimodal. 
One also obtains the following test for second-order systems.
\begin{cor}
	\label{cor:second_order}
	Let $g$ be the impulse response of a causal stable LTI second-order system. Then the system  is strongly unimodal if and only if $g \in \mathcal{S}_{\geq 0}$ and $$ \dot{g}(0)^2 -  g(0) \ddot{g}(0) \geq 0.$$
\end{cor}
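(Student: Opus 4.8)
The plan is to reduce the statement to \Cref{prop:strong_uni_sys} and then exploit the fact that, for a second-order system, the ``log-concavity defect'' $W(t):=\dot g(t)^2-g(t)\ddot g(t)$ itself satisfies a scalar linear ODE, so that it cannot change sign on $[0,\infty)$ and therefore may be tested at the single point $t=0$. First I would invoke \Cref{prop:strong_uni_sys}: stability ensures $g\in L_1$, so the system is strongly unimodal if and only if $g\in\mathcal{S}_{\geq 0}$ and $W(t)\geq 0$ for all $t\geq 0$. Hence it suffices to prove that, for a second-order system, $W(t)\geq 0$ for all $t\geq 0$ is equivalent to $W(0)\geq 0$.

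Write the system as $(A,b,c)$ with $A\in\mathbb{R}^{2\times 2}$ and let $s^2+a_1 s+a_0=\det(sI-A)$ be its characteristic polynomial. By the Cayley--Hamilton theorem $A^2+a_1 A+a_0 I=0$, so $e^{At}$ — and therefore $g(t)=ce^{At}b$ for $t\geq 0$ — satisfies the second-order linear ODE $\ddot g+a_1\dot g+a_0 g=0$. Substituting $\ddot g=-a_1\dot g-a_0 g$ into $W$ gives $W=\dot g^2+a_1 g\dot g+a_0 g^2$; differentiating and substituting for $\ddot g$ once more makes all remaining terms collapse and yields the identity $\dot W=-a_1 W$ on $[0,\infty)$. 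Consequently $W(t)=W(0)\,e^{-a_1 t}$ for $t\geq 0$, and since $e^{-a_1 t}>0$ we obtain $W(t)\geq 0$ for all $t\geq 0$ if and only if $W(0)=\dot g(0)^2-g(0)\ddot g(0)\geq 0$. Combining this with the reduction from \Cref{prop:strong_uni_sys} proves the claim.

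The only real work is the bookkeeping behind the identity $\dot W=-a_1 W$; the key point that renders the condition ``finite-dimensional'' is precisely that $W$ obeys a \emph{first-order} linear ODE, whose solutions keep a constant sign. One should also note the degenerate case in which a pole--zero cancellation makes the impulse response effectively first-order (or zero): then $g(t)=\alpha e^{\lambda t}$ gives $W\equiv 0$, so $\dot g(0)^2-g(0)\ddot g(0)\geq 0$ holds automatically, and the conclusion is already contained in the remark following \Cref{prop:strong_uni_sys} that an externally positive first-order system is strongly unimodal.
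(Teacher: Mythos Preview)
Your argument is correct, and it takes a genuinely different route from the paper's. The paper simply writes the impulse response explicitly as $g(t)=\beta_1 e^{-\lambda_1 t}+\beta_2 e^{-\lambda_2 t}$ (real poles being forced by $g\in\mathcal{S}_{\geq 0}$ via \Cref{prop:dominantpole}) and computes directly
\[
\dot g(t)^2-g(t)\ddot g(t)=-\beta_1\beta_2(\lambda_1-\lambda_2)^2\,e^{-(\lambda_1+\lambda_2)t},
\]
which is a single exponential with sign fixed by its value at $t=0$. Your approach instead avoids any explicit form for $g$: via Cayley--Hamilton you observe that $g$ solves $\ddot g+a_1\dot g+a_0 g=0$, and then derive the Abel-type identity $\dot W=-a_1 W$ for $W=\dot g^2-g\ddot g$, giving $W(t)=W(0)e^{-a_1 t}$. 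Since $a_1=\lambda_1+\lambda_2$, the two computations agree. What your approach buys is uniformity: it handles the repeated-pole case (and even, formally, the complex-pole case) without any case distinction, whereas the paper's explicit formula tacitly assumes distinct real poles. What the paper's approach buys is immediacy: one line of algebra, and the sign of $W(0)$ is read off as $-\beta_1\beta_2(\lambda_1-\lambda_2)^2$, tying the criterion directly to the residues.
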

A proof to \cref{cor:second_order} is provided in \Cref{proof:second_order}.

\begin{example}[Mass-spring-damper system]
	\label{ex:MSD}
	Strong unimodality prevents oscillations in the step response of a system. As an illustration,  the classical \emph{mass-spring-damper system} with external force $u$ (see~\Cref{fig:MSD}), is modelled by the differential equation
	\begin{align}
	\ddot{x} + \frac{\beta}{m} \dot{x} +  \frac{k}{m} x = u \label{eq:MSD}
	\end{align}
	where $x$ stands for the displacement of the mass and $m,k,\beta >0$ denote the mass, spring and damping coefficients, respectively. Letting, $p := \sqrt{\beta^2 - 4k}$, the (causal) impulse response $g $ of this system is 
	\begin{equation}
	\label{eq:MSD_imp}
  g(t) = \frac{m}{p }\left(e^{-\frac{(\beta - p)t}{2}}-e^{- \frac{(\beta+p)t}{2}}\right). 
	\end{equation}
	In the overdamped case, $p \geq 0$, it follows that $g \in \mathcal{S}_{\geq 0}$. Further, since $\dot{g}(0)^2 -  g(0) \ddot{g}(0) = 1$, \cref{cor:second_order} implies that the system is also strongly unimodal. An example output for a unimodal input can be found in \cref{fig:imp_MSDs}. Thus, strong unimodality requires the mass-spring-damper system to be overdamped. This will be made even clearer in \cref{thm:poles}. 
\end{example}

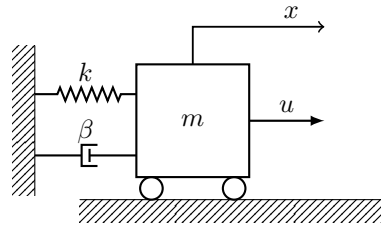
\begin{figure}
	\hspace*{.02 cm}
\begin{center}
	\begin{tikzpicture}[every node/.style={draw,outer sep=0pt,thick},force/.style={>=latex,draw=black,fill=black,thick}]
	\tikzstyle{spring}=[thick,decorate,decoration={zigzag,pre length=0.3cm,post length=0.2cm,segment length=4}]
	\tikzstyle{damper}=[thick,decoration={markings,  
		mark connection node=dmp,
		mark=at position 0.5 with 
		{
			\node (dmp) [thick,inner sep=0pt,transform shape,rotate=-90,minimum width=8pt,minimum height=3pt,draw=none] {};
			\draw [thick] ($(dmp.north east)+(3pt,0)$) -- (dmp.south east) -- (dmp.south west) -- ($(dmp.north west)+(3pt,0)$);
			\draw [thick] ($(dmp.north)+(0,-2.5pt)$) -- ($(dmp.north)+(0,2.5pt)$);
		}
	}, decorate]
	\tikzstyle{ground}=[fill,pattern=north east lines,draw=none,minimum width=0.75cm,minimum height=0.3cm]
	
	\node (wall) [ground, rotate=-90, minimum width=2cm,yshift=-2cm] {};
	\draw (wall.north east) -- (wall.north west);
	
	\node (M) [minimum width=1.5cm, minimum height=1.5cm,xshift = .25 cm] {$m$}; 
	
 
	\draw[->, line width = .7] (M) ++ (0,.75 cm) -- ++(0,.5 cm)  -- ++(1.75,0) node[near end, above, draw = none]{$x$};
	
	\node (ground) [ground,anchor=north,yshift=-0.3cm,minimum width=4 cm,xshift = .5 cm] at (M.south) {};
	\draw (ground.north east) -- (ground.north west);
	
	\draw [thick] (M.south west) ++ (0.2cm,-0.15cm) circle (0.15cm)  (M.south east) ++ (-0.2cm,-0.15cm) circle (0.15cm);

	\draw [spring] (wall.157) -- ($(M.north west)!(wall.157)!(M.south west)$) node[above, draw = none, midway,yshift = 1.5 pt]{$k$};
	\draw [damper] (wall.18) -- ($(M.north west)!(wall.18)!(M.south west)$) node[above, draw = none, midway, yshift = 1.5 pt]{$\beta$};
	
	\draw[force,->] (M.west) ++ (1.5 cm,0) -- ++(1,0) node[above,draw = none, midway] {$u$};

	\end{tikzpicture}
\end{center}
	\caption{Mass-spring-damper system with spring and damping coefficients $k, \beta \geq 0$, mass $m > 0$ and external force $u$ is strongly unimodal if $\beta^2 \geq 4k$.}
	\label{fig:MSD}
\end{figure}

\section{State-space characterization of strong unimodality}
\label{sec:state_space}
In this section, we present our main results on strongly unimodal systems. Our first result shows that strong unimodality of a state-space model is equivalent to external positivity of a compound state-space model.

	\begin{thm}[State-space characterization]
		\label{thm:log_sys}
		Let $(A,b,c)$ be the realization of a causal impulse response $g (t) = ce^{At}b \mathds{1}_{[0,\infty)}(t)$. Then $\dot{g}(t)^2-g(t) \ddot{g}(t) $ is the impulse response
		of the state-space model $(\bar{A},\bar{b},\bar{c})$, where
		\begin{align}
		\bar{A} = A \oplus A , \quad \bar{b} =
		Ab \otimes Ab - b \otimes A^2b, \quad \bar{c} =
		c \otimes c, \label{eq:log_state}
		\end{align}
		i.e., $(A,b,c)$ is strongly unimodal if and only if $(A,b,c)$ and $(\bar{A},\bar{b},\bar{c})$ are externally positive systems. 
		
		A minimal realization $(\tilde{A},\tilde{b},\tilde{c})$ of $(\bar{A},\bar{b},\bar{c})$ has the following poles:
		\begin{equation}
		\sigma(\tilde{A}) \subset \{\lambda_i(A) + \lambda_j(A): j > i\}. \label{eq:set_poles}
		\end{equation}
		Further, if $(A,b,c)$ is minimal, then equality holds in \cref{eq:set_poles}.
	\end{thm}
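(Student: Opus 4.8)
The plan is to handle the three claims in turn. \textbf{The realization \cref{eq:log_state}.} Writing $g(t)=ce^{At}b$ on $[0,\infty)$, the classical derivatives are $\dot g(t)=ce^{At}(Ab)$ and $\ddot g(t)=ce^{At}(A^2b)$, so $\dot g$ is the impulse response of $(A,Ab,c)$ and $\ddot g$ that of $(A,A^2b,c)$. Applying \Cref{thm:impprod} to the pointwise products $\dot g\cdot\dot g$ and $g\cdot\ddot g$ shows that $\dot g^2$ is realized by $(A\oplus A,\ Ab\otimes Ab,\ c\otimes c)$ and $g\ddot g$ by $(A\oplus A,\ b\otimes A^2b,\ c\otimes c)$. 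Since these two realizations share the same state matrix $A\oplus A$ and the same output matrix $c\otimes c$, their difference $\dot g^2-g\ddot g$ is the impulse response of their parallel interconnection, i.e.\ of $(\bar A,\bar b,\bar c)$ with $\bar A=A\oplus A$, $\bar c=c\otimes c$ and $\bar b=Ab\otimes Ab-b\otimes A^2b$. The stated equivalence is then immediate: by \Cref{prop:strong_uni_sys}, $(A,b,c)$ is strongly unimodal iff $g\mathds{1}_{[0,\infty)}\ge0$ and $\dot g^2-g\ddot g\ge0$ on $[0,\infty)$, and by \Cref{lem:nonneg_imp} these two nonnegativity statements are exactly external positivity of $(A,b,c)$ and of $(\bar A,\bar b,\bar c)$.

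\textbf{Containment of the poles.} A minimal realization $(\tilde A,\tilde b,\tilde c)$ of $(\bar A,\bar b,\bar c)$ has $\sigma(\tilde A)$ equal to the poles of the transfer function $H(s)=\bar c(sI-\bar A)^{-1}\bar b$, and by the standard spectrum of a Kronecker sum these lie in $\sigma(\bar A)=\{\lambda_i(A)+\lambda_j(A):1\le i,j\le n\}$. Viewed as a set of complex numbers, the only values in $\sigma(\bar A)$ that are \emph{not} of the form $\lambda_i(A)+\lambda_j(A)$ with $j>i$ are values $2\lambda_i$ where $\lambda_i$ is a \emph{simple} eigenvalue of $A$ admitting no other representation $\lambda_k+\lambda_l$; for such a value the generalized eigenspace of $\bar A$ at $2\lambda_i$ is the span of $v_i\otimes v_i$, so $2\lambda_i$ is a simple eigenvalue of $\bar A$. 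It therefore suffices to show that the residue of $H$ at such a value vanishes. With $w_i^{\transp}A=\lambda_i w_i^{\transp}$ and $w_i^{\transp}v_i=1$, that residue equals
\[
(cv_i)^2\big[(w_i^{\transp}Ab)^2-(w_i^{\transp}b)(w_i^{\transp}A^2b)\big]=(cv_i)^2\big[\lambda_i^2(w_i^{\transp}b)^2-\lambda_i^2(w_i^{\transp}b)^2\big]=0 .
\]
(Conceptually, the swap involution $\Pi$ with $\Pi(x\otimes y)=y\otimes x$ commutes with $\bar A$ and satisfies $\bar c\,\Pi=\bar c$, so $H$ depends only on the $\Pi$-symmetric part of $\bar b$; the identity above is the vanishing of that part along the diagonal modes.) This establishes \cref{eq:set_poles}.

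\textbf{Equality under minimality.} When $(A,b,c)$ is minimal I would prove the reverse inclusion, namely that every $\lambda_i+\lambda_j$ with $i\ne j$ is a genuine pole of $H$. A minimal SISO realization forces $A$ to be non-derogatory, so only two cases arise. If $\lambda_i\ne\lambda_j$, combining the $(i,j)$ and $(j,i)$ contributions gives a residue of $H$ at $\lambda_i+\lambda_j$ proportional to $(cv_i)(cv_j)(w_i^{\transp}b)(w_j^{\transp}b)(\lambda_i-\lambda_j)^2$, which is nonzero because observability of $(A,c)$ gives $cv_i\ne0$ and controllability of $(A,b)$ gives $w_i^{\transp}b\ne0$. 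If instead $\lambda_i=\lambda_j=:\lambda$ belong to a common Jordan block of size $n_0$, writing that block's contribution to $g$ as $p(t)e^{\lambda t}$ with $\deg p=n_0-1$ (maximal by minimality), a direct computation shows the coefficient of $e^{2\lambda t}$ in $\dot g^2-g\ddot g$ coming from this block is $\dot p^{\,2}-p\ddot p$, whose leading coefficient does not vanish; hence $2\lambda$ is a pole, consistently with $2\lambda\in\{\lambda_i+\lambda_j:j>i\}$ obtained from the repeated indices.

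\textbf{Expected main obstacle.} The only real difficulty is the bookkeeping with repeated eigenvalues: the clean eigenvector/residue computations above must be rerun with generalized eigenvectors, or equivalently through the partial-fraction expansion $g(t)=\sum_k p_k(t)e^{\lambda_k t}$, and one has to check that the cancellations in $H$ occur for precisely the diagonal modes $2\lambda_i$ with $\lambda_i$ simple and for no other value, so that the set of poles coincides with $\{\lambda_i(A)+\lambda_j(A):j>i\}$.
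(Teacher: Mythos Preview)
Your first part (the realization and the strong-unimodality equivalence) coincides with the paper's argument: both simply invoke \Cref{thm:impprod} and \Cref{prop:strong_uni_sys}.

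For the pole statements, however, your route and the paper's differ. The paper does \emph{not} compute residues. Instead it rewrites, with $\hat J=T^{-1}AT$ the complex Jordan form and $\hat b=T^{-1}b$, $\hat c=cT$,
\[
\dot g(t)^2-g(t)\ddot g(t)=\hat c\,K(t)\,\hat J^{\transp}\hat c^{\transp},
\qquad
K(t):=e^{\hat Jt}\bigl(\hat J\hat b\hat b^{\transp}-\hat b\hat b^{\transp}\hat J^{\transp}\bigr)e^{\hat J^{\transp}t},
\]
and observes that $K(t)=-K(t)^{\transp}$. Skew-symmetry kills the diagonal contributions in one stroke, which gives \cref{eq:set_poles} without any case distinction on multiplicities. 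For equality, the paper again exploits $K=-K^{\transp}$: the $(i,j)$ and $(j,i)$ contributions combine into $\bigl[\hat c_i(\hat c\hat J)_j-\hat c_j(\hat c\hat J)_i\bigr]K_{ij}(t)$, and controllability in Jordan coordinates forces $K_{ij}\not\equiv0$ for $i\ne j$, while observability forces $\hat c_i\ne0$, so the bracket cannot vanish.

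Your argument is correct, and your swap-involution remark is precisely the conceptual content of the paper's skew-symmetry trick ($\Pi\bar b=-\bar b+$ symmetric part that annihilates against $\bar c$). The trade-off is this: your residue computation is very transparent in the diagonalizable case, but, as you yourself flag, the repeated-eigenvalue bookkeeping has to be done by hand (your $\dot p^{\,2}-p\ddot p$ sketch is fine for a single Jordan block, but mixed blocks $\lambda_i$ repeated, $\lambda_j$ distinct still need the generalized-eigenvector version of your residue identity). The paper's skew-symmetric rewriting absorbs all of that into a single matrix identity and avoids the case split entirely. If you want to tighten your write-up, the quickest fix is to replace the residue computations by the observation that $\bar b$ lies in the $(-1)$-eigenspace of $\Pi$ modulo $\ker\bar c$, which is exactly the $K=-K^{\transp}$ statement in tensor language.
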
 
\cref{thm:log_sys} is proven in \Cref{proof:log_sys}. Note that a tractable, sufficient test for external positivity of $(A,b,c)$ and $(\tilde{A},\tilde{b},\tilde{c})$ is given in \cref{prop:ex_pos_test}. 

Next we present a key property of  strongly unimodal LTI systems.
\begin{thm}[Dominant poles]
	\label{thm:poles} 
If $(A,b,c)$ is the minimal realization of a strongly unimodal LTI system, then it has two dominant real poles, that is, $\lambda_1(A)  \in \mathbb{R}$ and $ \lambda_2(A)
 \in \mathbb{R}$. \end{thm}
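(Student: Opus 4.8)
The plan is to reduce the claim to \cref{prop:dominantpole}, applied twice. For the first dominant pole: a strongly unimodal system is externally positive (its impulse response is log-concave, hence nonnegative, so \cref{lem:nonneg_imp} applies), and therefore \cref{prop:dominantpole} immediately gives $\lambda_1(A)\in\mathbb{R}$. This also settles everything when $n=1$, so from now on assume $n\ge 2$.

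For the second pole I would invoke \cref{thm:log_sys}. By that theorem the compound system $(\bar A,\bar b,\bar c)$ of \cref{eq:log_state} is externally positive; letting $(\tilde A,\tilde b,\tilde c)$ be a minimal realization of it (still externally positive, since external positivity is a property of the impulse response), \cref{prop:dominantpole} gives $\lambda_1(\tilde A)\in\mathbb{R}$. Since $(A,b,c)$ is minimal, \cref{thm:log_sys} also gives the exact spectrum $\sigma(\tilde A)=\{\lambda_i(A)+\lambda_j(A):j>i\}$; in particular $\lambda_1(A)+\lambda_2(A)\in\sigma(\tilde A)$, so this set is nonempty. Because the eigenvalues of $A$ are ordered by non-increasing real part, every pair $i<j$ satisfies $\operatorname{Re}\lambda_i(A)\le\operatorname{Re}\lambda_1(A)$ and $\operatorname{Re}\lambda_j(A)\le\operatorname{Re}\lambda_2(A)$, so the largest real part occurring in $\sigma(\tilde A)$ is exactly $\lambda_1(A)+\operatorname{Re}\lambda_2(A)$, attained at $\lambda_1(A)+\lambda_2(A)$.

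The crux is to convert $\lambda_1(\tilde A)\in\mathbb{R}$ into $\lambda_2(A)\in\mathbb{R}$, which I would do by contradiction. Suppose $\lambda_2(A)\notin\mathbb{R}$. Since $A$ is real, $\overline{\lambda_2(A)}\in\sigma(A)$ with the same real part, and exactly one of $\lambda_2(A),\overline{\lambda_2(A)}$ has strictly negative imaginary part; call it $\nu$. As $\lambda_1(A)$ is real we have $\nu\ne\lambda_1(A)$, so $\nu=\lambda_j(A)$ for some $j\ge 2$ and $\lambda_1(A)+\nu\in\sigma(\tilde A)$. This sum has the maximal real part $\lambda_1(A)+\operatorname{Re}\lambda_2(A)$ but, since $\lambda_1(A)$ is real, strictly negative imaginary part. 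By the tie-breaking rule (ascending imaginary part among eigenvalues of equal real part), $\lambda_1(\tilde A)$ is the element of $\sigma(\tilde A)$ of maximal real part with the smallest imaginary part, hence $\operatorname{Im}\lambda_1(\tilde A)\le\operatorname{Im}(\lambda_1(A)+\nu)<0$, contradicting $\lambda_1(\tilde A)\in\mathbb{R}$. Therefore $\lambda_2(A)\in\mathbb{R}$.

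The main obstacle I anticipate is the bookkeeping with the ordering convention on $\sigma(\tilde A)$: one must be sure that a non-real $\lambda_2(A)$ genuinely forces $\lambda_1(\tilde A)$ to be non-real, rather than being masked by some other sum $\lambda_i(A)+\lambda_j(A)$ of the same real part. The argument above sidesteps this by using only that \emph{some} eigenvalue of $\tilde A$ of maximal real part has strictly negative imaginary part, which already forces $\operatorname{Im}\lambda_1(\tilde A)<0$. Everything else is a direct application of \cref{thm:log_sys} and \cref{prop:dominantpole}.
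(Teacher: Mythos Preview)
Your proposal is correct and follows essentially the same route as the paper: apply \cref{prop:dominantpole} to $(A,b,c)$ to get $\lambda_1(A)\in\mathbb{R}$, then apply it to the minimal compound system $(\tilde A,\tilde b,\tilde c)$ of \cref{thm:log_sys} and use $\sigma(\tilde A)=\{\lambda_i(A)+\lambda_j(A):j>i\}$ to force $\lambda_2(A)\in\mathbb{R}$. The paper simply asserts $\lambda_1(\tilde A)=\lambda_1(A)+\lambda_2(A)$ and subtracts, whereas you supply the tie-breaking bookkeeping that justifies this identification; the added care is warranted but does not change the argument.
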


This property illustrates how much strong unimodality restricts positivity: externally positive sytems require one dominant real pole, whereas stronly unimodal systems have two dominant real poles. The property also provides a  mathematical justification for our damping interpretation in \Cref{ex:MSD} for unimodal LTI systems of arbitrary order. In particular, if the system is of order three, then the three poles are necessarily real.

\begin{example}
	Consider the mass-spring-damper system \cref{eq:MSD} in series with an integrator. The dynamics are described by
	\begin{align}
	\dddot{x} + \frac{\beta}{m} \ddot{x} +  \frac{k}{m} \dot{x} = u \label{eq:MSD_int}
	\end{align}
	and the impulse response $g$ is
	\begin{equation}
	\label{eq:MSD_int_imp}
	g(t) = \frac{m}{p} \int_{0}^{t} \left(e^{-\frac{(\beta-p)\tau}{2}}-e^{- \frac{(\beta+p)\tau}{2}}\right) d \tau.
	\end{equation}
	In the underdamped case, $p < 0$, the integrand undergoes a harmonic damped oscillation with an initial positive displacement (spring extension), which is why the system is externally positive  However, due to the negative displacement phases of the integrand (spring contraction), $g$ inherits those oscillations, and is therefore not unimodal. As we noticed earlier, unimodality of the impulse response is necessary for a system to be strongly unimodal. This fact is also visualized by the example input \cref{subfig:MSD_input} with corresponding output \cref{subfig:output_MSD}.
\end{example}

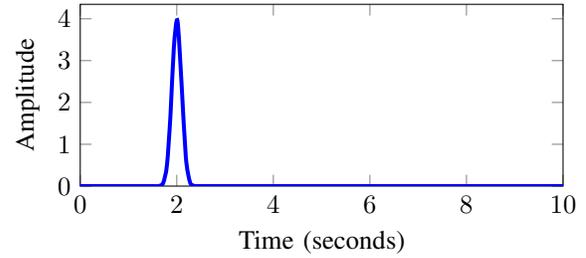
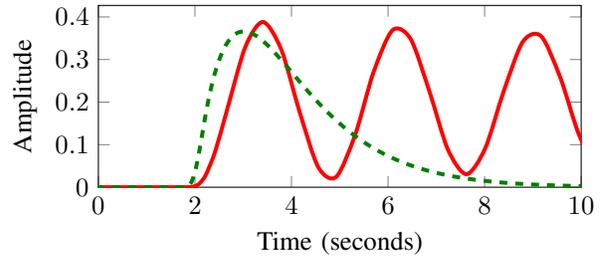
\begin{figure}[t]
	\hspace*{0.02 cm}
\begin{center}
	\begin{tikzpicture}
	\begin{groupplot}[group style={group name=my plots, group size=1 by 2,vertical sep=2.2 cm}, height = 4.0 cm, width = 8 cm]
	\nextgroupplot[
	xlabel= {Time (seconds)},
	ylabel = {Amplitude},
	ymin= 0,
	xmin = 0,
	xmax = 10,
	scaled ticks=false, 
	tick label style={/pgf/number format/fixed}]
	\addplot[smooth, color = FigColor1] file{u_msd_gauss.txt};
	\coordinate (nl) at (current axis.north);
	\nextgroupplot[
	xlabel= {Time (seconds)},
ylabel = {Amplitude},
	ymin=0,
	xmin = 0,
	xmax = 10,
	scaled ticks=false, 
	tick label style={/pgf/number format/fixed}]
	\addplot[smooth, color = FigColor2] file{y_msd_gauss.txt};
		\label{output:MSD_gauss_int}
			\addplot[smooth, color = FigColor3, dashed] file{y_msd_gauss_over.txt};
		\label{output:MSD_gaus_over}
	\coordinate (nl) at (current axis.north);
	\end{groupplot}

	\node[text width=6cm ,align=left,anchor=north] at ([yshift= -8 mm]my plots c1r1.south) {\subcaption{Unimodal input to \cref{eq:MSD,eq:MSD_int} \label{subfig:MSD_input}}};%
	\node[text width=8cm ,align=left,anchor=north] at ([yshift=-8 mm]my plots c1r2.south) 
	{\subcaption{Outputs \ref{output:MSD_gaus_over} to \cref{eq:MSD} and \ref{output:MSD_gauss_int} to \cref{eq:MSD_int}.\label{subfig:output_MSD}}};%
	\end{tikzpicture}
	\caption{(a) Input $u(t) = 
	\frac{e^{-50(t-2)^2}}{\sqrt{2 \pi 0.01}} \mathds{1}_{[0,\infty)}(t)$ to the overdamped MSD systems \cref{eq:MSD} with $m=1=k$ and $\beta = 2$, as well as, to the integrated underdamped MSD system \cref{eq:MSD_int} with $m=1$, $\beta = 0.05$ and $k = 5$; (b) Corresponding outputs to the systems. Since the output to \cref{eq:MSD_int} is not unimodal, the system cannot be strongly unimodal. \label{fig:imp_MSDs}}
\end{center}
\end{figure}

\section{Linear and Non-linear interconnections}
\label{sec:inter}
What makes positivity and unimodality properties attractive is that they are not restricted to linear models. Here we illustrate some interconnection properties that involve LTI models and static nonlinearities. We first recall the following two results.
\begin{lem}[Closedness \cite{boyd2004convex}]
	\label{lem:prop_log_conc}
	Log-concave functions are closed under convolution and multiplication.
\end{lem}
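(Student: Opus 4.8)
The plan is to prove the two closure properties separately, since multiplication is immediate while convolution is the substantive part. For multiplication, let $f,g \in \mathcal{S}_{\text{logc}}$ and write $f = e^{\phi}$, $g = e^{\psi}$ with $\phi,\psi \in \mathcal{S}_{\text{c}}$ (allowing the value $-\infty$ where $f$ or $g$ vanishes, as permitted by the definition of log-concavity). Then $fg = e^{\phi+\psi}$ with $\phi+\psi$ concave as a sum of concave functions, and $fg \geq 0$, so $fg \in \mathcal{S}_{\text{logc}}$.

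For convolution, set $h = f \ast g$, i.e. $h(t) = \int_{-\infty}^{\infty} f(t-\tau)\,g(\tau)\,d\tau \geq 0$; for the application in this paper $f,g \in L_1 \cap \mathcal{S}_{\text{logc}}$, so $h \in L_1$ by Young's inequality and $h$ is finite a.e. Fix $x,y \in \mathbb{R}$ and $\lambda \in (0,1)$, put $z = \lambda x + (1-\lambda)y$; the goal is $h(z) \geq h(x)^{\lambda} h(y)^{1-\lambda}$, which is trivial if $h(x)=0$ or $h(y)=0$, so assume both positive. Introduce the nonnegative functions $F(s) := f(x-s)\,g(s)$, $G(s) := f(y-s)\,g(s)$, $H(s) := f(z-s)\,g(s)$, whose integrals over $\mathbb{R}$ are $h(x)$, $h(y)$, $h(z)$. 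Using $z - \big(\lambda s_1 + (1-\lambda)s_2\big) = \lambda(x-s_1) + (1-\lambda)(y-s_2)$ together with log-concavity of $f$ and of $g$ separately, for all $s_1,s_2 \in \mathbb{R}$ one gets
\[
H\big(\lambda s_1 + (1-\lambda)s_2\big) = f\!\big(\lambda(x-s_1)+(1-\lambda)(y-s_2)\big)\,g\!\big(\lambda s_1+(1-\lambda)s_2\big) \geq F(s_1)^{\lambda}\,G(s_2)^{1-\lambda}.
\]
The Pr\'ekopa--Leindler inequality then gives $\int H \geq \big(\int F\big)^{\lambda}\big(\int G\big)^{1-\lambda}$, i.e. $h(z) \geq h(x)^{\lambda} h(y)^{1-\lambda}$, so $h \in \mathcal{S}_{\text{logc}}$. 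Equivalently, one may invoke Pr\'ekopa's marginal theorem: $(t,\tau)\mapsto f(t-\tau)\,g(\tau)$ is log-concave on $\mathbb{R}^2$ (the multiplication property composed with a linear change of coordinates), and marginals of log-concave functions are log-concave.

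The algebraic verification of the pointwise bound on $H$ is routine; the real ingredient is the Pr\'ekopa--Leindler inequality (equivalently, Pr\'ekopa's theorem on marginals of log-concave densities), which I would cite from \cite{boyd2004convex} rather than reprove. The only technical nuisances are the degenerate cases $h(x)=0$ or $h(y)=0$, dispatched above, and well-definedness of $h$, which holds on $L_1 \cap \mathcal{S}_{\text{logc}}$ — the setting relevant to the sequel.
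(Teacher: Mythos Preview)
Your proof is correct. Note, however, that the paper does not actually prove this lemma: it is stated with a citation to \cite{boyd2004convex} and used as a black box, so there is no ``paper's own proof'' to compare against. Your argument---trivial for multiplication, and Pr\'ekopa--Leindler (equivalently, Pr\'ekopa's marginal theorem) for convolution---is the standard route and exactly what underlies the cited reference. Nothing more is needed here; in the spirit of the paper you could simply cite the result, but if you wish to include a proof, what you wrote is fine.
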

In particular, products of log-concave impulse responses lead to strongly unimodal LTI systems.

\begin{lem}[Composition \cite{boyd2004convex}]
	\label{lem:com_log_unimod}
	If $g \in \mathcal{S}_{\text{qc}}$  and $f:\mathbb{R} \to \mathbb{R}$ is monotonically increasing, then the composition $f \circ g\in \mathcal{S}_{\text{qc}}$.
\end{lem}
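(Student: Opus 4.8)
\textbf{Proof proposal for \cref{lem:com_log_unimod}.}
The plan is to work directly from the quasi-concavity characterization of unimodality in \cref{def:strong_unimod} (item 2), since this avoids any appeal to differentiability or to a uniquely attained mode. Fix $x,y \in \mathbb{R}$ and $\lambda \in [0,1]$. Since $g \in \mathcal{S}_{\text{qc}}$, we have
\begin{equation*}
g(\lambda x + (1-\lambda) y) \geq \min\{g(x), g(y)\}.
\end{equation*}
The first step is to apply $f$ to both sides: because $f$ is monotonically increasing, it is order-preserving, so
\begin{equation*}
(f \circ g)(\lambda x + (1-\lambda) y) \geq f\bigl(\min\{g(x), g(y)\}\bigr).
\end{equation*}

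The second step is the elementary observation that a monotonically increasing map commutes with $\min$ of two arguments: if, say, $g(x) \leq g(y)$, then $\min\{g(x),g(y)\} = g(x)$ and monotonicity gives $f(g(x)) \leq f(g(y))$, hence $f(\min\{g(x),g(y)\}) = f(g(x)) = \min\{f(g(x)), f(g(y))\}$; the case $g(y) \leq g(x)$ is symmetric. Combining the two steps yields
\begin{equation*}
(f \circ g)(\lambda x + (1-\lambda) y) \geq \min\{(f\circ g)(x), (f\circ g)(y)\},
\end{equation*}
which is precisely quasi-concavity of $f \circ g$, i.e. $f \circ g \in \mathcal{S}_{\text{qc}}$.

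There is essentially no obstacle here: the argument is a two-line chain of inequalities, and the only point requiring a word of care is that monotonicity of $f$ is used twice — once to preserve the inequality and once to push $f$ through the $\min$ — so I would state the $\min$-commutation fact explicitly rather than leave it implicit. As an alternative one could instead invoke item 1 of \cref{def:strong_unimod} (monotone on $(-\infty,m]$, monotone on $[m,\infty)$) and note that post-composing each monotone branch with the increasing $f$ keeps it monotone in the same direction, so $f\circ g$ is again unimodal with the same mode $m$; I would keep the quasi-concavity proof as the primary one since it needs no regularity assumptions on $g$.
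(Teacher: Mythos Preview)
Your proof is correct and is essentially the standard argument for this fact (as in the cited reference \cite{boyd2004convex}); the paper itself does not give a proof of \cref{lem:com_log_unimod} but simply imports it from that source. One minor correction: the two equivalent characterizations of unimodality you invoke are the content of the \emph{Unimodality} definition, not of \cref{def:strong_unimod} (which defines strongly unimodal LTI systems), so the cross-reference should be adjusted.
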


By adopting the definition of strongly unimodal LTI systems in \cref{def:strong_unimod}, \Cref{lem:prop_log_conc,lem:com_log_unimod} provide us with the following interconnection properties.\begin{prop}[Interconnections]
	Serial interconnections of strongly unimodal LTI systems and static monotonically increasing non-linearities are strongly unimodal. \end{prop}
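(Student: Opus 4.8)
The plan is to argue entirely at the level of signal maps, using the input-output characterization of strong unimodality rather than the impulse-response picture: call a (possibly nonlinear) system $G$ \emph{strongly unimodal} when $G(u)\in\mathcal{S}_{\text{qc}}$ for every admissible $u\in\mathcal{S}_{\text{qc}}$, which for an LTI system is exactly \cref{def:strong_unimod} and, by \cref{prop:log_conv_unimod}, equivalent to log-concavity of its impulse response. The point is that this property is preserved under composition of signal maps for free, and a serial interconnection is precisely such a composition. So it suffices to check that each elementary block in the chain — a strongly unimodal LTI system, or a static monotonically increasing nonlinearity $f$, which acts on a signal $v$ as $t\mapsto f(v(t))$ — maps $\mathcal{S}_{\text{qc}}$ into $\mathcal{S}_{\text{qc}}$, and then to induct on the length of the chain.

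For an LTI block this is \cref{prop:log_conv_unimod} (equivalently \cref{def:strong_unimod}); for a static nonlinearity block it is \cref{lem:com_log_unimod}, applied to whatever signal is currently presented at its input — and this applies regardless of where the nonlinearity sits in the cascade, since ``compose a quasi-concave signal with an increasing map to get a quasi-concave signal'' is a self-contained statement. Concretely, writing the interconnection as $H_k\circ\cdots\circ H_1$ with each $H_j$ one of the two block types, I would show by induction on $j$ that $H_j\circ\cdots\circ H_1(u)\in\mathcal{S}_{\text{qc}}$ for all $u\in\mathcal{S}_{\text{qc}}$: the inductive step invokes \cref{prop:log_conv_unimod} when $H_j$ is LTI and \cref{lem:com_log_unimod} when $H_j$ is a static nonlinearity. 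Taking $j=k$ gives the claim. As an aside, consecutive LTI blocks can alternatively be collapsed into one strongly unimodal LTI block before running this argument: by \cref{lem:prop_log_conc} their impulse responses convolve to a log-concave function, which still lies in $L_1$ because $\|g_1\ast g_2\|_{L_1}\le\|g_1\|_{L_1}\|g_2\|_{L_1}<\infty$, so \cref{prop:log_conv_unimod} again applies.

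The one point that needs care, and the main place to be precise rather than the main mathematical obstacle, is that \cref{prop:log_conv_unimod,lem:com_log_unimod} presuppose that the convolutions and compositions involved are well defined as finite-valued signals, so in a cascade one must pass to each stage not merely a quasi-concave signal but one lying in the admissible input class of the next block. The clean way around this is to adopt — as \cref{def:strong_unimod} already does implicitly — the convention that the admissible inputs are those $u\in\mathcal{S}_{\text{qc}}$ for which every convolution appearing along the chain is well defined; the induction above then goes through verbatim. Beyond this bookkeeping I do not anticipate any genuine difficulty: the substance of the proposition is carried entirely by \cref{prop:log_conv_unimod,lem:prop_log_conc,lem:com_log_unimod}.
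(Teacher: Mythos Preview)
Your proposal is correct and matches the paper's approach: the paper simply states that the proposition follows from \cref{def:strong_unimod} together with \cref{lem:prop_log_conc,lem:com_log_unimod}, which is exactly the block-by-block preservation of $\mathcal{S}_{\text{qc}}$ you spell out by induction. Your write-up is in fact more detailed than the paper's, which gives no explicit argument beyond citing those two lemmas.
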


\tikzstyle{neu}=[draw, very thick, align = center,circle]
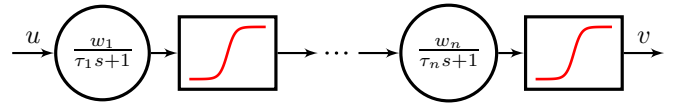
\begin{figure}
	\centering
		\begin{tikzpicture}[>=latex',circle dotted/.style={dash pattern=on .05mm off 1.2mm,
			line cap=round}]
	\node (a) [neu] {$\frac{w_1}{\tau_1 s+1}$}; 
	\node (b) [left of=a,node distance=1.2cm, coordinate]{};
	\path[->,thick] (b) edge node[above]{$u$} node(u_inc)[above]{} (a);
	\node [coordinate] (end) [right of=a, node distance=1cm]{};
	\path[->,thick] (a) edge node[above]{} (end);
	
	\node (sig) [int, at = (end), minimum height = .5 cm, anchor = {west}] {
	\begin{tikzpicture}
	\begin{axis}[ticks = none,width = 2.8 cm, axis lines = none]
	\addplot[line width = 1 pt, color = FigColor2] file{sigmoid.txt};
	\end{axis}
	\end{tikzpicture}};
	
	\node [coordinate] (mid1) [right of=sig, node distance = 1.2 cm]{};
	\path[->,thick] (sig) edge node[above]{} (mid1);
	
	
	\node [coordinate] (mid2) [right of=mid1, node distance = .12 cm]{};
	
	\node [coordinate] (mid3) [right of=mid2, node distance = .3 cm]{};
	
	\path[circle dotted,very thick] (mid2) edge node[above]{} (mid3);
	
	\node [coordinate] (b1) [right of=mid3, node distance = .12 cm]{};

	\node [coordinate] (sys2) [right of=b1, node distance = 1.2 cm]{};
	
	\node (a1) [neu, at = (sys2)] {$\frac{w_n }{\tau_n s+1}$};  
	\path[->,thick] (b1) edge node[above]{} (a1);
	
	\node [coordinate] (end1) [right of=a1, node distance=1cm]{};
	\path[->,thick] (a1) edge node[above]{} (end1);

		\node (sig1) [int, at = (end1), minimum height = .5 cm, anchor = {west}] {
			\begin{tikzpicture}
			\begin{axis}[ticks = none,width = 2.8 cm, axis lines = none]
			\addplot[line width = 1 pt, color = FigColor2] file{sigmoid.txt};
			\end{axis}
			\end{tikzpicture}};
	Output Sigmoid 2
\node [coordinate] (end2) [right of=sig1, node distance = 1.2 cm]{};
\path[->,thick] (sig1) edge node[above]{$v$} (end2);

	\end{tikzpicture}

\caption{Firing rate model for a serial interconnection of $n$ neurons, with rate coefficients and input weights $ \tau_1, w_1, \dots,\tau_n,w_n > 0$, is a strongly unimodal non-linear system.  \label{fig:neuron}}

\end{figure}

\begin{example}[Firing rate model of a neuron]
	An example of such non-linear systems is the serial interconnection of neurons \cref{fig:neuron}, where the output firing rate $v$ of each neuron is modeled by \cite{dayan2001theoretical}
	\begin{equation*}
	\begin{aligned}
	\tau \dot{I_s} &= - I_s + w u\\
	v &= F(I_s)
	\end{aligned}
	\end{equation*}
	where $u$ is an input rate, $\tau,w > 0$ are rate and weight coefficients, $I_s$ is the synaptic current and $F$ is a static non-linear activation function, e.g., the sigmoid function $F(x) = \frac{1}{1+e^{-x}}$. By our interconnection rules, the serial interconnection of such systems is strongly unimodal.
\end{example}

A contrario, the next result shows that parallel interconnections of strongly unimodal LTI systems are not necessarily strongly unimodal. This is in contrast to positive systems. The following result shows that this is the case even for first-order models.
\begin{lem}
	\label{thm:symm_sys}
	Let  $g(t) = \sum_{i=1}^n b_i e^{-\alpha_i t}$ with $b_i, \alpha_i > 0$ and $n \geq 2$ be such that $\alpha_{i} \neq \alpha_{i+1}$ for all $i$. Then,
	\begin{align*}
	\forall t \geq 0: \ \dot{g}(t)^2-g(t) \ddot{g}(t) < 0.
	\end{align*}	
	Therefore, $g \mathds{1}_{[0,\infty)} \in \mathcal{S}_{\geq 0} \setminus \mathcal{S}_{\text{logc}}$, which implies that strongly unimodal LTI systems are not closed under parallel interconnection. 
\end{lem}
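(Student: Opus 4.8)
The plan is to compute $\dot{g}(t)^2 - g(t)\ddot{g}(t)$ in closed form and to exhibit it as a strictly negative function on $[0,\infty)$. Writing $\dot{g}(t) = -\sum_{i=1}^n b_i\alpha_i e^{-\alpha_i t}$ and $\ddot{g}(t) = \sum_{i=1}^n b_i\alpha_i^2 e^{-\alpha_i t}$, I would expand both $\dot{g}(t)^2$ and $g(t)\ddot{g}(t)$ as double sums over index pairs $(i,j)$, each term carrying the factor $e^{-(\alpha_i+\alpha_j)t}$. Subtracting the two expansions gives
$$\dot{g}(t)^2 - g(t)\ddot{g}(t) = \sum_{i,j} b_i b_j\, \alpha_j(\alpha_i - \alpha_j)\, e^{-(\alpha_i+\alpha_j)t}.$$

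Next I would symmetrize the sum. For $i \neq j$, the combined $(i,j)$ and $(j,i)$ contribution is $b_i b_j\big(\alpha_j(\alpha_i-\alpha_j) + \alpha_i(\alpha_j-\alpha_i)\big) e^{-(\alpha_i+\alpha_j)t} = -b_i b_j(\alpha_i-\alpha_j)^2 e^{-(\alpha_i+\alpha_j)t}$, while every diagonal term $i=j$ vanishes. Hence
$$\dot{g}(t)^2 - g(t)\ddot{g}(t) = -\sum_{1 \le i < j \le n} b_i b_j(\alpha_i - \alpha_j)^2\, e^{-(\alpha_i+\alpha_j)t}.$$
Since all $b_i > 0$, every summand is nonnegative for each $t$, and the hypothesis $\alpha_1 \neq \alpha_2$ makes the $(i,j)=(1,2)$ summand strictly positive; therefore the whole expression is strictly negative for every $t \geq 0$, which is the claimed inequality.

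The concluding statements then follow immediately. The impulse response $g$ is a positive combination of exponentials, so $g\mathds{1}_{[0,\infty)} \in \mathcal{S}_{\geq 0}$; but by \cref{lem:twice_diff} the strict inequality just derived shows that the log-concavity condition fails on every interval contained in $[0,\infty)$, so $g\mathds{1}_{[0,\infty)} \notin \mathcal{S}_{\text{logc}}$. Finally, each summand $b_i e^{-\alpha_i t}\mathds{1}_{[0,\infty)}$ is the impulse response of an externally positive first-order system, which is strongly unimodal by the remark following \cref{prop:strong_uni_sys}; their parallel interconnection has impulse response $g\mathds{1}_{[0,\infty)}$, which is not log-concave and hence, by \cref{prop:log_conv_unimod} (equivalently \cref{prop:strong_uni_sys}), not strongly unimodal. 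This establishes non-closedness under parallel interconnection.

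I do not expect a genuine obstacle here: the argument is a short, direct computation. The only point needing a little care is the symmetrization bookkeeping that collapses the double sum into the manifestly signed single sum over $i<j$; everything else — nonnegativity of the summands and the passage back to the impulse-response characterization of strong unimodality — is immediate from the results already established.
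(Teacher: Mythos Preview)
Your proposal is correct and follows essentially the same approach as the paper: both expand $\dot{g}^2 - g\ddot{g}$ as a double sum, symmetrize to obtain the manifestly negative expression $-\tfrac{1}{2}\sum_{i,j} b_i b_j(\alpha_i-\alpha_j)^2 e^{-(\alpha_i+\alpha_j)t}$ (equivalently your $-\sum_{i<j}$ form), and then invoke \cref{lem:twice_diff} and \cref{prop:strong_uni_sys}. Your write-up is in fact slightly more explicit than the paper's in justifying strict negativity and the closing implications.
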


The following result shows that the difference of two positive systems can be strongly unimodal.
\begin{lem}
	\label{prop:diff}
	Let $g = b_1 e^{-\alpha_1 t} - \sum_{i=2}^n b_i e^{-\alpha_i t}$ with $b_i, \alpha_i > 0$ and $n \geq 1$ be such that
	\begin{equation}
	\forall j \geq 2: \ 2 (\alpha_{1} - \alpha_j)^2 \geq \max_i (\alpha_i -\alpha_j)^2 \label{eq:ass}
	\end{equation}
	Then, $g \mathds{1}_{[0,\infty)} \in \mathcal{S}_{\geq 0}$  if and only if $g \mathds{1}_{[0,\infty)} \in \mathcal{S}_{\text{logc}}$. 
\end{lem}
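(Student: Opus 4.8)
The direction $g\mathds{1}_{[0,\infty)}\in\mathcal{S}_{\text{logc}}\Rightarrow g\mathds{1}_{[0,\infty)}\in\mathcal{S}_{\geq0}$ is immediate, since membership in $\mathcal{S}_{\text{logc}}$ presupposes $g\in\mathcal{S}_{\geq0}$. For the converse, observe that $g$ is the impulse response of the stable diagonal realization $A=\diag(-\alpha_1,\dots,-\alpha_n)$, $b=(1,\dots,1)^{\transp}$, $c=(b_1,-b_2,\dots,-b_n)$, so $g\mathds{1}_{[0,\infty)}\in L_1$ and by \cref{prop:strong_uni_sys} (equivalently \cref{lem:twice_diff}) it suffices to show that $\dot g(t)^2-g(t)\ddot g(t)\geq0$ for all $t\geq0$, under the standing assumption that $g\geq0$ on $[0,\infty)$, i.e.\ that $h(t):=\sum_{j=2}^{n}b_je^{-\alpha_j t}\leq b_1e^{-\alpha_1 t}=:u(t)$ for all $t\geq0$.

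The first step is to decompose $\dot g^2-g\ddot g$ along $g=u-h$. Since $u$ is log-affine, $\dot u^2-u\ddot u\equiv0$, and expanding the remaining terms with $\dot u=-\alpha_1 u$, $\ddot u=\alpha_1^2u$ gives
\[
\dot g^2-g\ddot g\;=\;u\,\bigl(\ddot h+2\alpha_1\dot h+\alpha_1^2 h\bigr)\;+\;\bigl(\dot h^2-h\ddot h\bigr).
\]
A direct computation identifies the first summand as $b_1e^{-\alpha_1 t}S(t)$ with $S(t):=\sum_{j=2}^{n}b_j(\alpha_1-\alpha_j)^2e^{-\alpha_j t}\geq0$, and the second as $-\sum_{2\leq j<l\leq n}b_jb_l(\alpha_j-\alpha_l)^2e^{-(\alpha_j+\alpha_l)t}\leq0$, i.e.\ the (signed) quantity already encountered in \cref{thm:symm_sys}. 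Everything thus reduces to dominating this negative sum by the nonnegative term $b_1e^{-\alpha_1 t}S(t)$.

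This domination is the heart of the argument and the place where the hypothesis \cref{eq:ass} enters. Applied with index $j$ and $i=l$, respectively with index $l$ and $i=j$, it yields $(\alpha_j-\alpha_l)^2\leq2(\alpha_1-\alpha_j)^2$ and $(\alpha_j-\alpha_l)^2\leq2(\alpha_1-\alpha_l)^2$ for all $j,l\geq2$; multiplying these and taking square roots gives $(\alpha_j-\alpha_l)^2\leq2\,|\alpha_1-\alpha_j|\,|\alpha_1-\alpha_l|$. Writing $x_j:=b_j|\alpha_1-\alpha_j|e^{-\alpha_j t}\geq0$, the negative sum is then bounded by $\sum_{j<l}2x_jx_l\leq\bigl(\sum_{j\geq2}x_j\bigr)^2$, and Cauchy--Schwarz, splitting $x_j=\sqrt{b_je^{-\alpha_j t}}\cdot\sqrt{b_je^{-\alpha_j t}}\,|\alpha_1-\alpha_j|$, bounds this in turn by $\bigl(\sum_{j\geq2}b_je^{-\alpha_j t}\bigr)S(t)=h(t)S(t)$. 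Finally $h(t)\leq b_1e^{-\alpha_1 t}$ and $S(t)\geq0$ give $h(t)S(t)\leq b_1e^{-\alpha_1 t}S(t)$, so the negative sum does not exceed the positive term. Hence $\dot g^2-g\ddot g\geq0$ on $[0,\infty)$, and \cref{prop:strong_uni_sys} yields $g\mathds{1}_{[0,\infty)}\in\mathcal{S}_{\text{logc}}$.

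The main obstacle is the third paragraph: seeing that \cref{eq:ass} is exactly what converts the two-index penalty $(\alpha_j-\alpha_l)^2$ into a product of one-index factors, which then collapses through the elementary estimate $\sum_{j<l}2x_jx_l\leq(\sum_jx_j)^2$ and Cauchy--Schwarz into a single sum that the nonnegativity of $g$ absorbs; that \cref{eq:ass} (or something like it) is genuinely needed is the content of \cref{thm:symm_sys}. The remaining steps — exhibiting the realization, reducing via \cref{prop:strong_uni_sys}, and performing the $u$--$h$ decomposition of $\dot g^2-g\ddot g$ — are routine.
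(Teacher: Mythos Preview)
Your proof is correct. Both you and the paper reduce to \cref{prop:strong_uni_sys} and arrive at the identical decomposition
\[
\dot g(t)^2-g(t)\ddot g(t)=b_1e^{-\alpha_1 t}S(t)-\tfrac12\sum_{i,j\geq 2}b_ib_j(\alpha_i-\alpha_j)^2e^{-(\alpha_i+\alpha_j)t},
\]
so the only difference lies in how the negative double sum is dominated by the first term. The paper does this termwise: from $g\geq 0$ it extracts the two consequences $b_1\geq\sum_{i\geq 2}b_i$ (from $t=0$) and $\alpha_1\leq\alpha_i$ (from $t\to\infty$), replaces $b_1$ by $\sum_{i\geq 2}b_i$ and then $e^{-\alpha_1 t}$ by $e^{-\alpha_i t}$, after which \cref{eq:ass} makes each $(i,j)$ term nonnegative. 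You instead take the geometric mean of two instances of \cref{eq:ass} to factor $(\alpha_j-\alpha_l)^2\leq 2|\alpha_1-\alpha_j||\alpha_1-\alpha_l|$, collapse the double sum via $\sum_{j<l}2x_jx_l\leq(\sum_j x_j)^2$ and Cauchy--Schwarz into $h(t)S(t)$, and finally invoke the pointwise inequality $h\leq u$ (which is exactly $g\geq 0$). Your route is slightly longer but uses the nonnegativity hypothesis in its natural pointwise form without isolating $\alpha_1\leq\alpha_i$; the paper's route is shorter and entirely termwise. Both are valid and neither needs anything the other supplies.
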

\begin{rem}
	By \cref{thm:symm_sys,prop:diff}, we can see that while the sum of two first order strongly unimodal LTI systems is not strongly unimodal, the difference preserve strong unimodality if it is externally positive. 
\end{rem}

Finally note that \cref{lem:prop_log_conc} also gives the following stronger result for log-concave inputs.
\begin{prop}
	Let $g $ be the impulse response to a strongly unimodal LTI system. Then,
	$$\forall u \in \mathcal{S}_{\text{logc}}:  g  \ast u \in \mathcal{S}_{\text{logc}}.$$
\end{prop}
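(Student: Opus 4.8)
The plan is to obtain this as an immediate consequence of two facts already in hand: that strong unimodality of an LTI system is equivalent to log-concavity of its impulse response, and that the class of log-concave functions is closed under convolution (\cref{lem:prop_log_conc}).

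First I would invoke \cref{prop:log_conv_unimod} together with the discussion following it, which identifies the impulse response $g$ of a strongly unimodal LTI system as an element of $L_1 \cap \mathcal{S}_{\text{logc}}$; in particular $g \geq 0$, consistent with \cref{lem:nonneg_imp}. Then, given an arbitrary $u \in \mathcal{S}_{\text{logc}}$, both $g$ and $u$ are nonnegative log-concave functions on $\mathbb{R}$, so \cref{lem:prop_log_conc} applies directly and yields $g \ast u \in \mathcal{S}_{\text{logc}}$, which is exactly the assertion. This is essentially a one-line argument once the earlier results are available.

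It is worth emphasising what the statement buys us: since $\mathcal{S}_{\text{logc}} \subset \mathcal{S}_{\text{qc}}$ by \cref{prop:log_conc_unimod}, this is a genuine strengthening of \cref{def:strong_unimod} — for log-concave inputs the system propagates not merely unimodality but the sharper, quantitative shape property of log-concavity. I do not anticipate a real obstacle here; the only point deserving a line of care is that $g \ast u$ be well defined pointwise, which holds because $g \in L_1$ while $u$, being log-concave, is measurable and locally bounded, so that the convolution integral is finite wherever the log-concavity inequality is evaluated. All the substantive work has already been carried out in establishing \cref{prop:log_conv_unimod} and \cref{lem:prop_log_conc}.
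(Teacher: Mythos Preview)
Your argument is correct and matches the paper's approach exactly: the paper presents this proposition without a separate proof, introducing it as an immediate consequence of \cref{lem:prop_log_conc} (closedness of $\mathcal{S}_{\text{logc}}$ under convolution) together with the characterization of strong unimodality via log-concavity of the impulse response from \cref{prop:log_conv_unimod}. Your added remark on well-definedness of $g \ast u$ is a reasonable bit of care but not something the paper spells out.
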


\section{Concluding remarks}
\label{sec:conc}
This paper has introduced the class of strongly unimodal systems, which is characterized by preserving unimodality from input to output. Our main result is that unimodality of a state-space model is equivalent to positivity of a compound state-space model (\cref{thm:log_sys}). We have also shown that unimodal systems are a subclass of externally positive systems. As a main property, they have {\it two} dominant real poles rather than {\it one}.  

In future work, we would like to generalize our results with the theory of total positivity \cite{karlin1968total,gantmacher1950oszillationsmatrizen}.  We anticipate that systems with a fixed number of dominant real poles can be studied via the external positivity of a compound system. This paves the way for novel system analysis tools to characterize input-output properties via the important {\it variation diminishing} concept: inputs with a certain number of variations are mapped to outputs with the same (or less) number of variations. The theory of total positivity suggests that this analysis framework is general, with plausible extensions to discrete LTI systems,  linear time-space-invariant (LTSI)  models, linear time-varying linear systems, and nonlinear systems.

\bibliography{refkpos,refopt,refpos,science}

\appendix
\label{sec:app}
\subsection{Proofs}
\subsubsection{Proof to \cref{lem:ex_mono}}
\label{proof:ex_mono}
\begin{proof} We want to show that $\dot{y}: \mathbb{R}_{\geq 0} \to \mathbb{R}_{\geq 0}$. By \cite{flanders1973differentiation}, $\frac{d}{dt} y(t) = g(0)u(t) + \int_0^t ce^{A(t-\tau)}b \dot{u}(\tau) d\tau$, where the monotonicity of $u$ implies that $\dot{u}(s)$ exist almost everywhere on $[0,t]$ \cite[Theorem~6.3.3]{cohn2013measure}. Hence, since $\dot{u}, u \in \mathcal{S}_{\geq 0}$, applying \cref{lem:nonneg_imp} proves our claim. 
\end{proof}
\subsubsection{Proof to \cref{cor:second_order}}
\label{proof:second_order}
\begin{proof}
	By \cref{lem:nonneg_imp}, it follows that $g(t) = \beta_1 e^{-\lambda_1 t} + \beta_2 e^{-\lambda_2}$, $\lambda_{1}, \lambda_2 > 0$ and $\beta_1,\beta_2 \in \mathbb{R}$. Then, 
	\begin{equation*}
	\dot{g}(t)^2 -  g(t) \ddot{g}(t) = -\beta_1 \beta_2e^{-t(\lambda_1 + \alpha_2)}(\lambda_1-\lambda_2)^2.
	\end{equation*}
\end{proof}

\subsubsection{Proof to \cref{thm:log_sys}}
\label{proof:log_sys}
\begin{proof}
	The first part is an application of \Cref{thm:impprod} and the fact that $\frac{d^k}{dt^k} g(t) = cA^k e^{At}b$. In order to prove the second part, let $T \in \mathbb{C}^{n \times n}$ be such that $\hat{J} = T^{-1}AT$ is the complex Jordan form of $A$. Then, with $\hat{b} := T^{-1}b$ and $\hat{c} := cT$,

	\begin{multline*}
\left[\frac{d}{dt} g(t)\right]^2-g(t) \frac{d^2}{dt^2} g(t)   \\
	= {c}e^{{A}t}\left({A}{b}{b}^\transp - {b}{b}^\transp {A}^\transp \right)e^{{A}^\transp t}{A}^\transp {c}^\transp
	= \hat{c}K(t)\hat{J}^\transp \hat{c}^\transp.
	\end{multline*}
	with $K(t):=e^{\hat{J}t}\left({J}\hat{b}\hat{b}^\transp - \hat{b}\hat{b}^\transp \hat{J}^\transp \right)e^{\hat{J}^\transp t}$. Since $K(t) = -K(t)^\transp$ and $\hat{J}$ in Jordan form, we conclude that $\left[\frac{d}{dt} g(t)\right]^2-g(t) \frac{d^2}{dt^2} g(t)$ only depends on exponentials of the form $e^{(\lambda_i(A)+\lambda_j(A))t}$ with $j > i$. Hence, \cref{eq:set_poles} follows by $\sigma(\tilde{A}) \subset(\bar{A})$. 
	
	To see the last claim notice that if $(\hat{J},\hat{b},\hat{c})$ is minimal, then the controllability of $(\hat{J},\hat{b})$ implies that $K_{ij}(t)\not\equiv 0$  for $i \neq j$. Thus, $\left[\frac{d}{dt} g(t)\right]^2-g(t) \frac{d^2}{dt^2} g(t)$ does not depend on $e^{(\lambda_i(A)+\lambda_j(A))t}$ for some $i > j$ if and only if for all $t\geq 0$
	\begin{align*}
	\hat{c}_i (\hat{c}\hat{J})_j K_{ij}(t) + \hat{c}_j (\hat{c}\hat{J})_i K_{ji}(t) = 0 
	\intertext{which by $K = -K^\transp$ is equivalent to} 
	\hat{c}_i (\hat{c}\hat{J})_j =  \hat{c}_j (\hat{c}\hat{J})_i .
	\end{align*}
	However, since $(\hat{J},\hat{c})$ is observable, $\hat{c}$ does not contain any zero entries and therefore in conjunction with the Jordan form of $\hat{J}$, this case cannot occur. 
\end{proof}

\subsubsection{Proof to \cref{thm:poles}}
\label{proof:poles}
\begin{proof}
	The fact that $\lambda_1(A) \in \mathbb{R}$ is inherited from the external positivity (see~\cref{lem:nonneg_imp}). Further, with the notation of \cref{thm:log_sys}, it follows that $\lambda_1(\tilde{A}) = \lambda_1(A) + \lambda_2(A)$, which by \cref{lem:nonneg_imp} has to be real. The last claim is then a trivial consequence.
\end{proof}

\subsubsection{Proof to \cref{thm:symm_sys}}
\label{proof:symm_sys}
\begin{proof}
	Obviously, $g \mathds{1}_{[0,\infty)}$ is nonnegative as the sum of nonnegative functions. Further, for all $t\geq 0$
	\begin{multline*}
		\dot{g}(t)^2-g(t) \ddot{g}(t) = \sum_{i=1}^n \sum_{j=1}^n b_i b_j e^{-(\alpha_i+\alpha_j)t} (\alpha_i \alpha_j-\alpha_j^2) \\
	= - \frac{1}{2}\sum_{i=1}^n \sum_{j=1}^n b_i b_j e^{-(\alpha_i+\alpha_j)t} (\alpha_{i} - \alpha_j)^2 < 0
	\end{multline*}

	Therefore, by \cref{lem:twice_diff}, $g$ is not log-concave, which by \cref{prop:strong_uni_sys} implies that the parallel interconnection of first order log-concave systems is not log-concave. 
\end{proof}

\subsubsection{Proof to \cref{prop:diff}}
\label{proof:diff}
\begin{proof} We only need to show the case with  $g \mathds{1}_{[0,\infty)} \in \mathcal{S}_{\geq 0}$.
	Then for $t \geq 0$
	\begin{align*}
	&\dot{g}(t)^2-g(t) \ddot{g}(t) \\
	& = \sum_{j=1}^n b_1 b_j e^{-(\alpha_1+\alpha_j)t} (\alpha_{1} - \alpha_j)^2\\ 
	& - \frac{1}{2}\sum_{i=2}^n \sum_{j=2}^n b_i b_j e^{-(\alpha_i+\alpha_j)t} (\alpha_{i} - \alpha_j)^2 \\
	& \geq  \sum_{i=2}^n b_i \sum_{j=2}^n b_j e^{-(\alpha_1+\alpha_j)t} (\alpha_{1} - \alpha_j)^2\\ 
	& - \frac{1}{2}\sum_{i=2}^n b_i  \sum_{j=2}^nb_j e^{-(\alpha_i+\alpha_j)t} (\alpha_{i} - \alpha_j)^2 \geq 0\\
	&\geq \sum_{i=2}^n b_i  \sum_{j=2}^n b_j e^{-(\alpha_i+\alpha_j)t} \left((\alpha_{1} - \alpha_j)^2 - \frac{1}{2}(\alpha_{i} - \alpha_j)^2\right) \\
	& \geq 0,
	\end{align*}	
	where the last inequality follows by \cref{eq:ass} and the other inequalities are a consequence of $g \mathds{1}_{[0,\infty)} \in \mathcal{S}_{\geq 0}$, which implies that $b_1 \geq \sum_{i=2}^n b_i$ and $\alpha_1 \leq \alpha_i$ for all $i$.
\end{proof}
\end{document}